\newenvironment{conjecture*}[1][]{\textbf{Conjecture #1\hspace{.3em}}}{}
\newenvironment{theorem*}[1]{\textbf{#1}\itshape \hspace{.3em}}{\upshape}
\newenvironment{remark*}[1]{\textbf{#1}\itshape \hspace{.3em}}{\upshape}
\newenvironment{example*}[1]{\textbf{#1}\itshape \hspace{.3em}}{\upshape}
\newenvironment{observation*}[1]{\textbf{#1}\itshape \hspace{.3em}}{\upshape}
\newenvironment{proof}[1][]{\textbf{Proof #1\hspace{.3em}}}{}
\newtheorem{definition}{Definition}[section]
\newtheorem{theorem}[definition]{Theorem}
\newtheorem{lemma}[definition]{Lemma}
\newcounter{kpremark}
\newcommand{\mod}[1]{\ensuremath{\hspace{.5em}(#1)}}
\newcommand{\newo}{\ensuremath{\mathcal{O}}}
\newcommand{\modd}{\ensuremath{M(\mathit{odd})}}
\newcommand{\pr}{\ensuremath{\mathit{pr}}}
\newcommand{\tr}{\ensuremath{\mathit{Tr}}}
\begin{document}

\addtolength{\parskip}{-2pt}

\begin{frontmatter}






\title{Generators and relations for the shallow mod~$\bm 2$ Hecke algebra in levels $\bm{\Gamma_{0}(3)}$ and $\bm{\Gamma_{0}(5)}$}
\author{Paul Monsky}

\address{Brandeis University, Waltham MA  02454-9110, USA. monsky@brandeis.edu}

\begin{abstract}
Let $\modd\subset Z/2[[x]]$ be the space of odd mod~2 modular forms of level $\Gamma_{0}(3)$. It is known that the formal Hecke operators $T_{p}:Z/2[[x]]\rightarrow Z/2[[x]]$, $p$ an odd prime other than $3$, stabilize $\modd$ and act locally nilpotently on it. So $\modd$ is an $\newo =  Z/2[[t_{5},t_{7},t_{11},t_{13}]]$-module with $t_{p}$ acting by $T_{p}$, $p\in \{5,7,11,13\}$. We show:

\begin{enumerate}
\item[(1)] Each $T_{p}:\modd\rightarrow\modd$, $p\ne 3$, is multiplication by some $u$ in the maximal ideal, $m$, of $\newo$.
\item[(2)] The kernel, $I$, of the action of $\newo$ on $\modd$ is $(A^{2},AC,BC)$ where $A,B,C$ have leading forms $t_{5}+t_{7}+t_{13},t_{7},t_{11}$.
\end{enumerate}

We prove analogous results in level $\Gamma_{0}(5)$. Now $\newo$ is $Z/2[[t_{3},t_{7},t_{11},t_{13}]]$, and the leading forms of $A,B,C$ are $t_{3}+t_{7}+t_{11},t_{7},t_{13}$.

Let $\mathit{HE}$, ``the shallow mod~2 Hecke algebra (of level $\Gamma_{0}(3)$ or $\Gamma_{0}(5)$)'' be $\newo/I$. (1) and (2) above show that $\mathit{HE}$ is a 1 variable power series ring over the 1-dimensional local ring $Z/2[[A,B,C]]/(A^{2},AC,BC)$. For another approach to all these results, based on deformation theory, see Deo and Medvedovsky \cite{4}.

\end{abstract}


\end{frontmatter}


\section{Introduction}
\label{section1}

For $p$ an odd prime, $T_{p}: Z/2[[x]]\rightarrow  Z/2[[x]]$ is the formal Hecke operator $\sum c_{n}x^{n}\rightarrow \sum c_{pn}x^{n} + \sum c_{n}x^{pn}$; the $T_{p}$ commute. We'll be concerned with certain subspaces of $Z/2[[x]]$, coming from modular forms of level $\Gamma_{0}(N)$, and stabilized by the $T_{p}$, $p\nmid N$.  On the spaces we're looking at, each $T_{p}$ acts locally nilpotently. Let $S$ be a finite set of odd primes $p$ not dividing $N$, and $\newo$ be a power series ring over $Z/2$ in variables $t_{p}$, $p\in S$. Then our subspace is an $\newo$-module with $t_{p}$ acting by $T_{p}$. We'll show in some cases that each $T_{p}$ acting on the subspace is multiplication by an element of $\newo$ (which lies in the maximal ideal since $T_{p}$ acts locally nilpotently). And we'll describe the kernel, $I$, of the action of $\newo$ on the subspace.

The motivating level 1 example appears in \cite{3}. Let $F$ in $Z/2[[x]]$ be $x+x^{9}+x^{25}+\cdots $, the exponents being the odd squares. The subspace $V$ is spanned by the $F^{k}$, $k$ odd (and positive). $F$ is the mod~2 reduction of the weight 12 cusp form $\Delta$, and a modular forms interpretation of $V$ shows that the $T_{p}$ stabilize it; with more work one may show that they act locally nilpotently. Take $S=\{3,5\}$. Nicolas and Serre show:

\begin{enumerate}
\addtolength{\itemsep}{2pt}
\item[(1)] Each $T_{p}:V\rightarrow V$ is multiplication by an element of $Z/2[[t_{3},t_{5}]]$.
\item[(2)] $\newo = Z/2[[t_{3},t_{5}]]$ acts faithfully on $V$.
\end{enumerate}

Here is a level $\Gamma_{0}(3)$ example whose study was begun in \cite{1}.  Let $G=F(x^{3})$, and $\modd$ be spanned by the $F^{i}G^{j}$, where $i,j$ are $\ge 0$ and $i+j$ is odd. Here's a modular forms interpretation; $\modd$ consists of all odd power series that are mod~2 reductions of elements of $Z[[x]]$ arising as expansions at infinity of holomorphic modular forms of level $\Gamma_{0}(3)$ (and any weight). (We write $x$ in place of the customary $q$ for the expansion variable throughout.)  This interpretation shows that the $T_{p}$, $p\ne 3$, stabilize $\modd$. Using the local nilpotence of the $T_{p}$ acting on $V$, and on a certain subquotient $W$ of $\modd$ introduced in \cite{1}, we show that the $T_{p}$, $p\ne 3$, act locally nilpotently on $\modd$.

If we take $G$ to be $F(x^{5})$ instead of $F(x^{3})$ we get another subspace, which we'll also call $\modd$; it has a similar interpretation with $\Gamma_{0}(3)$ replaced by $\Gamma_{0}(5)$. This $\modd$ is stabilized by the $T_{p}$, $p\ne 5$, and we'll use results from \cite{2} to show that they act locally nilpotently on it.

We take $S$ to be $\{5,7,11,13\}$ in the level 3 example and to be $\{3,7,11,13\}$ in level 5.  We will show that the $T_{p}$, $p\ne 3$, are multiplication by elements of $\newo$ in the first case, while the $T_{p}$, $p\ne 5$, are multiplication by elements of $\newo$ in the second. In each case we'll determine the kernel, $I$, of the action. It is an ideal $(A^{2},AC,BC)$ where the degree 1 parts of $A$, $B$ and $C$ are linearly independent in the 4-dimensional vector space $m/m^{2}$. Apart from results from \cite{1}, \cite{2}, \cite{3} there are 2 simple new ideas. One is making use of the fact that a certain $\newo$-submodule of $V$ imbeds in the subquotient, $W$, of $\modd$. The other is showing that there are no non-zero $\newo$-linear maps $W\rightarrow V$.

Shaunak Deo and Anna Medvedovsky, \cite{4}, have derived the same results simultaneously with us.  They use techniques from deformation theory in place of our arguments, which are more related to ideas from \cite{3}.  Communications in both directions were helpful in understanding precisely what the kernel, $I$, of the action of $\newo$ on $\modd$ should be, and in completing the proofs, both for us in our arguments and for them in theirs. It would be interesting to understand how the proofs are related.

\section{$\bm\modd $ in level $\bm\Gamma_{0}(3)$}
\label{section2}

Throughout this section $\pr : Z/2[[x]]\rightarrow Z/2[[x]]$ is the map $\sum c_{n}x^{n}\rightarrow \sum_{(n,3)=1} c_{n}x^{n}$, $G$ is $F(x^{3})$ and $D=\pr(F)$. (We'll use a related but different notation in the next section.)

\begin{definition}
\label{def2.1}
$U_{3} : Z/2[[x]]\rightarrow Z/2[[x]]$ is the map $\sum c_{n}x^{n}\rightarrow \sum c_{3n}x^{n}$. $\modd\subset Z/2[[x]]$ is spanned by the $F^{i}G^{j}$, $i,j\ge 0$, $i+j$ odd.
\end{definition}

As was shown in \cite{1}, there is an interpretation of $\modd$ in terms of modular forms of level $\Gamma_{0}(3)$ that shows that the $T_{p}$, $p\ne 3$, stabilize it. It's also stabilized by $U_{3}$ (by a similar argument) but we'll only need the obvious fact that $U_{3}$ maps the space spanned by the $G^{k}$, $k$ odd, bijectively to $V$, and that this map commutes with $T_{p}$, $p\ne 3$. The following are proved in \cite{1}:

\begin{theorem}
\label{theorem2.2}\hspace{2em}
\begin{enumerate}
\item[(1)] $F$ has degree 4 over $Z/2(G)$ and $(F+G)^{4}=FG$.
\item[(2)] As $Z/2[G^{2}]$-module, $\modd$ has basis $\{G,F,F^{2}G,F^{3}\}$.
\item[(3)] The trace map $Z/2(F,G)\rightarrow Z/2(G)$ takes $G,F,F^{2}G,F^{3}$ to $0,0,0,G$.  So it gives a $Z/2[G^{2}]$-linear map $\tr : \modd\rightarrow\modd$. The kernel $N2$ and image $N1$ of $\tr$ have $Z/2[G^{2}]$-bases $\{G,F,F^{2}G\}$ and $\{G\}$.
\item[(4)] The filtration $\modd\supset N2\supset N1 \supset (0)$ of $\modd$ is ``Hecke-stable.''  I.e., the $T_{p}$, $p\ne 3$, stabilize $N2$ and $N1$.
\item[(5)] The image, $W$, of $N2$ under $\pr$ has $Z/2[G^{2}]$-basis $\{D,D^{2}G\}$. $N1$ is the kernel of $\pr : N2\rightarrow W$, and so the $T_{p}$, $p\ne 3$, stabilize $W$, and the isomorphism $N2/N1\rightarrow W$ commutes with $T_{p}$, $p\ne 3$.
\end{enumerate}
\end{theorem}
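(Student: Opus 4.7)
The plan proceeds in order, treating (1) as the key algebraic input, (2)--(3) as its formal consequences, and (4)--(5) as the Hecke-theoretic content requiring the interaction of $\pr$, $U_{3}$, and the $T_{p}$. For (1), I would derive $F^{4}+FG+G^{4}=0$, equivalent to $(F+G)^{4}=FG$ in characteristic 2, from the classical level-3 modular equation for $\Delta$: since $[\mathrm{SL}_{2}(\mathbb{Z}):\Gamma_{0}(3)]=4$, $\Delta(\tau)$ and $\Delta(3\tau)$ satisfy an irreducible degree-4 relation, which under the reduction $F\equiv\Delta$, $G\equiv\Delta(q^{3})\pmod{2}$ becomes the stated identity. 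Irreducibility of $Y^{4}+GY+G^{4}$ over $Z/2(G)$ (and hence the claim that $[Z/2(F,G):Z/2(G)]=4$ exactly) is inherited from the same classical irreducibility; a purely algebraic check is also available using that $F$ has $x$-adic valuation 1 while every element of $Z/2(G)$ has valuation divisible by 3.

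For (2), the relation $F^{4}=FG+G^{4}$ reduces any generator $F^{i}G^{j}$ of $\modd$ to one with $0\le i\le 3$; writing $G^{j}=G^{2m}\cdot G^{\epsilon}$ with $\epsilon\in\{0,1\}$ and noting that $i+j$ odd is equivalent to $i+\epsilon$ odd selects exactly the four pairs $(i,\epsilon)=(1,0),(0,1),(2,1),(3,0)$, giving the $Z/2[G^{2}]$-spanning set $\{F,G,F^{2}G,F^{3}\}$. Linear independence follows from (1): a $Z/2[G^{2}]$-dependence rearranges as a $Z/2(G)$-dependence among $\{1,F,F^{2},F^{3}\}$, which (1) rules out. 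For (3), apply Newton's identities to the monic polynomial $Y^{4}+GY+G^{4}$: elementary symmetric functions are $e_{1}=e_{2}=0$, $e_{3}=G$, $e_{4}=G^{4}$, giving power sums $p_{1}=0$, $p_{2}=-2e_{2}=0$, and $p_{3}=3e_{3}=G$ in characteristic 2. Thus $\tr(F)=\tr(F^{2})=0$, $\tr(F^{3})=G$, $\tr(G)=4G=0$, and $\tr(F^{2}G)=G\tr(F^{2})=0$. The kernel and image descriptions read off directly.

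For (5), compute $\pr$ on the $Z/2[G^{2}]$-basis of $N2$: $\pr(G)=0$ since $G$ is supported on multiples of 3, so $\pr$ kills $N1=Z/2[G^{2}]\cdot G$; $\pr(F)=D$ by definition; and $\pr(F^{2}G)=D^{2}G$, using $F^{2}=D^{2}+(F-D)^{2}$ in characteristic 2 together with the fact that $(F-D)^{2}G$ is supported on multiples of 3, while $D^{2}$ is supported in exponents $\equiv 2\pmod{3}$, making $D^{2}G$ itself coprime-to-3-supported. This identifies $W=\pr(N2)$ as having basis $\{D,D^{2}G\}$ and $N1$ as the $\pr$-kernel inside $N2$. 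For (4), Hecke-stability of $N1$ is immediate from the $T_{p}$-equivariant isomorphism $U_{3}:N1\to V$, $G^{k}\mapsto F^{k}$, combined with the Hecke-stability of $V$ from \cite{3}. For Hecke-stability of $N2$, I would show $\tr$ is Hecke-equivariant by interpreting the Galois conjugates of $F$ over $Z/2(G)$ as the action of a transversal for $\Gamma_{0}(3)\subset\mathrm{SL}_{2}(\mathbb{Z})$, realizing $\tr$ as the level 3-to-level 1 descent, which commutes with each $T_{p}$, $p\ne 3$. Stability of $W$ then follows from the easy identity $\pr T_{p}=T_{p}\pr$ (for $p\ne 3$) together with stability of $N2$. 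The main obstacle is (4), specifically Hecke-equivariance of $\tr$: parts (1)--(3) and (5) are pure power-series algebra modulo the identity in (1), but (4) genuinely requires the modular-forms interpretation of $\modd$ to get the Galois-conjugate / coset-action compatibility with the Hecke operators.
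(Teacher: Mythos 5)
This theorem is not proved in the paper at all: it is a list of results explicitly imported from \cite{1} (``The following are proved in \cite{1}''), so there is no in-paper argument to compare yours against. Judged on its own terms, your reconstruction of (2), (3) and (5) is correct and essentially forced once (1) is granted: the reduction of $F^iG^j$ via $F^4=FG+G^4$ with the parity bookkeeping, the Newton's-identity computation of the traces of $1,F,F^2,F^3$ from $Y^4+GY+G^4$, and the computation of $\pr$ on the basis of $N2$ (using that multiplication by a series supported on multiples of $3$ commutes with $\pr$) are all sound. Your identification of (4) as the genuinely modular part is also the right diagnosis; note that for the stability of $N1$ your argument via $U_3$ is exactly the ``obvious fact'' the paper itself isolates, and that the identification of the conjugates of $F$ as $F(x^9)$ and the $F(rx)$, which underlies the Galois/coset interpretation of $\tr$, is worked out algebraically in the paper's own Lemma \ref{lemma2.4} rather than through a transversal for $\Gamma_0(3)$.

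The one concrete gap is in your justification of (1). The valuation observation --- $F$ has $x$-adic valuation $1$ while every element of $Z/2(G)$ has valuation divisible by $3$ --- only shows $F\notin Z/2(G)$, i.e.\ rules out degree $1$; since the degree divides $4$, it could still a priori be $2$. Appealing to the irreducibility of the classical modular equation is also not automatic, since irreducibility need not survive reduction mod $2$. This is easily repaired: by Gauss's lemma a factorization of the monic polynomial $Y^4+GY+G^4$ over $Z/2(G)$ happens over $Z/2[G]$, a linear factor $Y+c(G)$ is excluded by comparing degrees in $G$, and a factorization into two quadratics $(Y^2+aY+b)(Y^2+aY+d)$ forces $a(b+d)=a^3=G$, impossible since $G$ is not a cube in $Z/2[G]$. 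With that patch, and with the Hecke-equivariance of $\tr$ taken from the modular-forms interpretation as you propose (this is precisely what \cite{1} supplies), your outline is a viable proof of the statement.
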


\begin{remark*}{Remarks}
\label{remarkafter2.2}
Besides the above bijection $N2/N1\rightarrow W$ commuting with $T_{p}$, $p\ne 3$, we have the bijection $U_{3}:N1\rightarrow V$ commuting with these $T_{p}$. Finally there is a composite bijection $\modd/N2\stackrel{\tr}{\rightarrow} N1\stackrel{U_{3}}{\rightarrow} V$. We'll show that this too commutes with $T_{p}$.
\end{remark*}

\begin{lemma}
\label{lemma2.3}
$T_{3}:V\rightarrow V$ is onto.
\end{lemma}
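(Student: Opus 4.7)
Proof Proposal: The plan is to pull in more structure on $V$ than is summarized by (1) and (2) in the introduction. Specifically, Nicolas--Serre \cite{3} show that $V$ is cofree of rank one as an $\newo$-module: there is a continuous $\newo$-linear isomorphism $V \cong \newo^{\vee} := \mathrm{Hom}_{\mathit{cts}}(\newo,\, Z/2)$, under which the action of $r \in \newo$ on $V$ is dual to multiplication-by-$r$ on $\newo$. An equivalent bookkeeping, which is what I would cite at the point of use: they produce a $Z/2$-basis $\{f_{i,j}\}_{i,j \geq 0}$ of $V$ dual to the monomial basis $\{t_{3}^{i} t_{5}^{j}\}$ of $\newo$, with $T_{3} f_{i,j} = f_{i-1,j}$ for $i \geq 1$ and $T_{3} f_{0,j} = 0$.

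With this in hand the lemma is essentially immediate. Multiplication by $t_{3}$ on $\newo$ is injective, since $\newo = Z/2[[t_{3}, t_{5}]]$ is a domain and $t_{3} \ne 0$. By exactness of the Matlis (equivalently Pontryagin) dual functor, the induced map on $\newo^{\vee} \cong V$ is surjective; by point (1) of Nicolas--Serre this induced map is precisely the action of $T_{3}$. Concretely: writing any $v \in V$ as a finite sum $v = \sum c_{i,j} f_{i,j}$, one has $T_{3}\bigl(\sum c_{i,j} f_{i+1,j}\bigr) = v$, so $T_{3} : V \to V$ is onto.

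The main obstacle is just identifying the correct piece of \cite{3}: items (1) and (2) alone do \emph{not} force surjectivity of $t_{3}$ on $V$ --- for instance, $\newo^{\vee} \oplus (Z/2[[t_{5}]])^{\vee}$ is a faithful, locally nilpotent $\newo$-module on which $t_{3}$ fails to be onto. The surjectivity here rests essentially on the rank-one cofreeness of $V$ over $\newo$, which is one of the main theorems of \cite{3} about this module. Once that is granted, no further computation with the $F^{k}$ is needed.
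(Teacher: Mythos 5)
Your proof is correct and is essentially the paper's own argument: the paper likewise invokes the Nicolas--Serre basis $\{m_{i,j}\}$ of $V$ adapted to $T_{3}$ and $T_{5}$ (which is exactly the dual-basis/cofreeness statement you cite) and observes that $T_{3}$ sends $m_{i+1,j}$ to $m_{i,j}$, hence is onto. Your concrete preimage $\sum c_{i,j}f_{i+1,j}$ is the same computation, just phrased through Matlis duality.
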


\begin{proof}
By \cite{3}, $V$ has a $Z/2$-basis $\{m_{i,j}\}$ with $m_{0,0}=F$, ``adapted to $T_{3}$ and $T_{5}$.''  But then $T_{3}$ takes $m_{i+1,j}$ to $m_{i,j}$.
\qed
\end{proof}

\begin{observation*}{Observation}
\label{observation1}
Here's an ``injective modules are divisible'' generalization of the above. Suppose $M$ is a $k[[X,Y]]$-module that admits a $k$-basis $m_{i,j}$ adapted to $X$ and $Y$. Then if $u$ in $k[[X,Y]]$ is non-zero, $uM=M$. (And in particular, the action is faithful.)  To see this, totally order $N\times N$, taking $(0,0)<(1,0)<(0,1)<(2,0)<(1,1)<(0,2)<(3,0)<\cdots $. Let $cX^{a}Y^{b}$ be the monomial appearing in the leading form of $u$ with largest $a$. Then $u(m_{a+i,b+j})=cm_{i,j} +$ a $k$-linear combination of $m_{r,s}$ with $(r,s)<(i,j)$, and an inductive argument using the total ordering gives the result.
\end{observation*}

\begin{lemma}
\label{lemma2.4}
The composite map $V\stackrel{\tr}{\rightarrow} N1\stackrel{U_{3}}{\rightarrow} V$ is $T_{3}$, and so, by Lemma \ref{lemma2.3}, is onto.
\end{lemma}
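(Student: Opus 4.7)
My plan is to verify $U_3 \circ \tr = T_3$ on the spanning set $\{F^k : k \text{ odd}\}$ of $V$, reducing the operator identity to a consequence of the $F \leftrightarrow G$ symmetry of the defining relation $F^4 + FG + G^4 = 0$ (Theorem \ref{theorem2.2}(1)).

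First I would write $T_3 = U_3 + V_3$ on $\bbz/2[[x]]$, where $V_3(f)(x) := f(x^3)$; here $V_3$ is a ring endomorphism with $V_3(F) = G$, and the formal identities $U_3 V_3 = \mathrm{id}$ and $U_3(h \cdot V_3(g)) = U_3(h) \cdot g$ hold. The goal then becomes $U_3(\tr(F^k)) = U_3(F^k) + G^k$. I would expand $F^k$ uniquely in the $\bbz/2[G^2]$-basis of $\modd$ from Theorem \ref{theorem2.2}(2):
\[
F^k = G \cdot a_k(G^2) + F \cdot b_k(G^2) + F^2 G \cdot c_k(G^2) + F^3 \cdot d_k(G^2), \qquad a_k, b_k, c_k, d_k \in \bbz/2[T].
\]
By Theorem \ref{theorem2.2}(3), $\tr(F^k) = d_k(G^2) \cdot G = V_3(F \cdot d_k(F^2))$, whence $U_3(\tr(F^k)) = F \cdot d_k(F^2)$. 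Next, applying $U_3$ termwise to the basis expansion—using $U_3(F) = G$, $U_3(F^2) = G^2$ (from the Frobenius rule $U_3(f^2) = U_3(f)^2$), and $U_3(F^3) = F + G^3$ (a short direct $x$-expansion computation)—yields
\[
U_3(F^k) = F \cdot a_k(F^2) + G \cdot b_k(F^2) + F G^2 \cdot c_k(F^2) + (F + G^3) \cdot d_k(F^2).
\]

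Substituting these into the goal and cancelling the common $F \cdot d_k(F^2)$, the entire claim reduces in characteristic $2$ to the single identity
\[
G^k = F \cdot a_k(F^2) + G \cdot b_k(F^2) + F G^2 \cdot c_k(F^2) + G^3 \cdot d_k(F^2). \qquad (\star)
\]
To establish $(\star)$, I would let $R := \bbz/2[F, G]/(F^4 + FG + G^4)$. By Theorem \ref{theorem2.2}(1) and Gauss's lemma applied to the primitive irreducible polynomial $F^4 + GF + G^4 \in \bbz/2[G][F]$, the natural map $R \to \bbz/2[[x]]$ is injective. The defining relation is symmetric in $F$ and $G$, so the assignment $\tau(F) = G$, $\tau(G) = F$ defines a ring involution of $R$. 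Applying $\tau$ to the basis expansion of $F^k$ (an identity in $R$) produces exactly $(\star)$, establishing it in $\bbz/2[[x]]$. Ontoness of $T_3 : V \to V$ is then immediate from Lemma \ref{lemma2.3}.

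The main obstacle will be spotting the $F \leftrightarrow G$ symmetry as the engine of the proof; once recognized, the only remaining technical input is the short direct computation $U_3(F^3) = F + G^3$.
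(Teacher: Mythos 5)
Your argument is genuinely different from the paper's and is correct in structure, but it has one step that is not justified by the method you claim for it. The paper proves the lemma by Galois theory in the power series ring: since $U(X,Y)=(X+Y)^{4}+XY$ is symmetric, the substitutions $x\mapsto x^{9}$ and $x\mapsto rx$ ($r^{3}=1$ in the field of $4$ elements) exhibit the four conjugates of $F$ over $Z/2(G)$ as $F(x^{9})$ and the $F(rx)$; summing the conjugates of $F^{k}$ kills the coefficients $c_{n}$ with $3\nmid n$ and contributes $F^{k}(x^{9})$, so applying $U_{3}$ gives $\sum c_{3n}x^{n}+G^{k}=T_{3}(F^{k})$ in one stroke, uniformly in $k$ and with no expansion of $F^{k}$ in the $Z/2[G^{2}]$-basis. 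You instead exploit the same symmetry of the relation purely algebraically, via the involution $F\leftrightarrow G$ of $Z/2[F,G]/(F^{4}+FG+G^{4})$ applied to the basis expansion of $F^{k}$; that part of your argument (the projection formula $U_{3}(h\cdot V_{3}(g))=U_{3}(h)g$, the identification $U_{3}(\tr(F^{k}))=F\,d_{k}(F^{2})$, the injectivity of $R\to Z/2[[x]]$, and the derivation of $(\star)$) is sound.

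The gap is the input $U_{3}(F^{3})=F+G^{3}$. Unlike $U_{3}(F)=G$ and $U_{3}(F^{2})=G^{2}$, which follow from the explicit expansion of $F$ as a sum over odd squares and the Frobenius rule, this identity concerns $F^{3}=\sum x^{a^{2}+b^{2}+c^{2}}$ whose coefficients are parities of representation numbers of a ternary quadratic form; it asserts the equality of two power series coefficient by coefficient for all $n$, so it cannot be established by "a short direct $x$-expansion computation," which only checks finitely many coefficients. It is exactly equivalent to the Nicolas--Serre statement $T_{3}(F^{3})=F$ (since $T_{3}=U_{3}+V_{3}$ and $V_{3}(F^{3})=G^{3}$), and it does not follow formally from the relation $(F+G)^{4}=FG$ together with $U_{3}(F)=G$: applying $U_{3}$ to $F^{4}=FG+G^{4}$ just reproduces the relation. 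So you must either cite $T_{3}(F^{3})=F$ from \cite{3} --- at which point your proof is complete, though it leans on an external computation the paper's conjugate argument avoids --- or prove it by an argument of the paper's type (e.g.\ note $\tr(F^{3})=G$ and sum conjugates), which would collapse your proof into the paper's.
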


\begin{proof}
Let $U(X,Y)$ be $(X+Y)^{4}+XY$. Then $U(F(x),F(x^{3}))=0$. Replacing $x$ by $x^{3}$ we find that $U(G,F(x^{9}))=0$. So $U(F(x^{9}),G)=0$, and $F(x^{9})$ is a conjugate of $F$ over $Z/2(G)$. Similarly, replacing $x$ by $rx$ where $r$ is in the field of 4 elements, $r^{3}=1$, we find that the other 3 conjugates of $F$ are the $F(rx)$. So the conjugates of $F^{k}$ are $F^{k}(x^{9})$ and $F^{k}(rx)$. Writing $F^{k}$ as $\sum c_{n}x^{n}$, adding together the conjugates, and applying $U_{3}$ we get $T_{3}(F^{k})$.
\qed
\end{proof}

\begin{theorem}
\label{theorem2.5}
The composite bijection $\modd/N2\stackrel{\tr}{\rightarrow} N1\stackrel{U_{3}}{\rightarrow} V$ commutes with $T_{p}$, $p\ne 3$. We conclude that $\modd/N2$, $N2/N1$ and $N1$ identify with $V$, $W$ and $V$ as Hecke-modules.
\end{theorem}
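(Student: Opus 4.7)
My plan is to transport $T_p$-equivariance from $V$ over to $\modd/N2$ along a natural surjection, rather than analyzing the field-theoretic trace map directly. Lemma \ref{lemma2.4} already computes $U_3\circ\tr$ restricted to $V\subset\modd$ as $T_3$, which commutes with every $T_p$; the goal is to bootstrap this into commutation on all of $\modd/N2$.

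Concretely, let $\pi : V \to \modd/N2$ be the inclusion $V\hookrightarrow\modd$ followed by the projection $\modd\twoheadrightarrow\modd/N2$. Both are $T_p$-equivariant for $p\ne 3$ --- the inclusion tautologically (each side carries the restriction of $T_p$ from $Z/2[[x]]$), and the projection by Theorem \ref{theorem2.2}(4), since $T_p$ preserves $N2$. Writing $\phi := U_3\circ\tr$ for the bijection in question, Lemma \ref{lemma2.4} becomes the identity $\phi\circ\pi = T_3$ on $V$. Lemma \ref{lemma2.3} says $T_3 : V \to V$ is surjective and $\phi$ is a bijection, so $\pi$ itself must be surjective.

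For any $p \ne 3$ and any $v \in V$, the chain
\[
T_p\bigl(\phi(\pi v)\bigr) \;=\; T_p(T_3 v) \;=\; T_3(T_p v) \;=\; \phi\bigl(\pi(T_p v)\bigr) \;=\; \phi\bigl(T_p(\pi v)\bigr)
\]
follows from $\phi\circ\pi = T_3$ (used twice), commutativity of Hecke operators, and $T_p$-equivariance of $\pi$. Surjectivity of $\pi$ then promotes this to $T_p\circ\phi = \phi\circ T_p$ on all of $\modd/N2$. The closing sentence of the theorem is simply a collation of the three Hecke-module identifications: $\modd/N2\cong V$ via $\phi$ (just established), $N2/N1\cong W$ via $\pr$ (Theorem \ref{theorem2.2}(5)), and $N1\cong V$ via $U_3$ (from the section preamble). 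There is no serious obstacle; the key move is recognizing that Lemma \ref{lemma2.3} makes Lemma \ref{lemma2.4} do double duty, producing both the surjectivity of $\pi$ and the mechanism for transferring commutation with $T_p$ across the isomorphism $\phi$.
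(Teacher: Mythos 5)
Your proposal is correct and is essentially the paper's own argument, just written out in more detail: the paper likewise uses Lemma \ref{lemma2.4} to see that the images of elements of $V$ span $\modd/N2$, and then notes that on $V$ the composite is $T_{3}$, which commutes with the $T_{p}$. The explicit chain of equalities and the surjectivity of $\pi$ make the same two steps precise.
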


\begin{proof}
By Lemma \ref{lemma2.4} the restriction of our map to $V$ is onto So the elements of $V$ span $\modd/N2$. And on $V$ our map is $T_{3}:V\rightarrow V$ which commutes with the $T_{p}:V\rightarrow V$.
\qed
\end{proof}

\begin{theorem}
\label{theorem2.6}
The $T_{p}$, $p\ne 3$, act locally nilpotently on $\modd$. In other words, if $f\in \modd$ and $p\ne 3$, some power of $T_{p}$ annihilates $f$.
\end{theorem}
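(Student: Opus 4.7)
The plan is to run a straightforward d\'evissage on the Hecke-stable filtration $\modd\supset N2\supset N1\supset (0)$ provided by Theorems \ref{theorem2.2} and \ref{theorem2.5}. The whole game is to reduce local nilpotence on $\modd$ to local nilpotence on each of the three graded pieces.

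First, I would cite Theorem \ref{theorem2.5}, which identifies the successive quotients $\modd/N2$, $N2/N1$, and $N1$ with $V$, $W$, and $V$ respectively, as modules over the $T_{p}$ with $p\ne 3$. Local nilpotence of every such $T_{p}$ on $V$ is the Nicolas--Serre theorem of \cite{3}, and local nilpotence on $W$ is the result from \cite{1} that the introduction flags as already known. Transporting through the Hecke-module isomorphisms of Theorem \ref{theorem2.5}, we conclude that $T_{p}$ acts locally nilpotently on each of the three factors $\modd/N2$, $N2/N1$, and $N1$.

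Next, fix $f\in\modd$ and $p\ne 3$. The image of $f$ in $\modd/N2\cong V$ is killed by some $T_{p}^{a}$, so $T_{p}^{a}f\in N2$. The image of $T_{p}^{a}f$ in $N2/N1\cong W$ is killed by some $T_{p}^{b}$, so $T_{p}^{a+b}f\in N1$. Finally, $T_{p}^{a+b}f\in N1\cong V$ is killed by some $T_{p}^{c}$, whence $T_{p}^{a+b+c}f=0$. This gives local nilpotence of $T_{p}$ on $\modd$.

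The main obstacle is not in this argument itself, which is a routine ``local nilpotence passes through finite filtrations'' observation. All the substance lies upstream: in constructing the filtration and identifying its graded pieces as Hecke modules (Theorems \ref{theorem2.2} and \ref{theorem2.5}, whose proofs depend on the $U_{3}$-trick of Lemma \ref{lemma2.4}), and in the external inputs \cite{3} and \cite{1} giving local nilpotence on $V$ and $W$.
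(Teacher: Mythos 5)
Your proposal is correct and is essentially the paper's own argument: the paper likewise cites \cite{3} and \cite{1} for local nilpotence on $V$ and $W$ and then invokes the Hecke-stable filtration $\modd\supset N2\supset N1\supset(0)$ with graded pieces $V$, $W$, $V$. You have merely written out the routine d\'evissage step that the paper leaves implicit.
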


\begin{proof}
\cite{3} and \cite{1} show that $T_{p}$ acts locally nilpotently on $V$ and $W$. And the quotients in the filtration of $\modd$ are Hecke-isomorphic to $V$, $W$ and $V$.
\qed
\end{proof}

For the rest of this section, unless otherwise noted, $S=\{5,7,11,13\}$ and $\newo$ is the 4-variable power series ring over $Z/2$ in the $t_{p}$, $p\in S$. Then $V$, $W$ and $\modd$ are all $\newo$-modules with $t_{p}$ acting by $T_{p}$, $p\in S$. Let $I(V)$, $I(W)$ and $I$ be the kernels of the respective actions.

$I(V)$ is easily described. As we noted in section \ref{section1}, when $V$ is viewed as a $Z/2[[t_{3},t_{5}]]$-module, the action is faithful, and each $T_{p}:V\rightarrow V$ is multiplication by some $u$ in $Z/2[[t_{3},t_{5}]]$. In \cite{3} it's shown that: 
\vspace{1ex}

\centerline{
\begin{tabular}{lll}
when & $p=11$ & $u=\mathrm{unit} (t_{3})$\\
when & $p=13$ & $u=\mathrm{unit} (t_{5})$\\
when & $p=7$ & $u=\mathrm{unit} (t_{3})(t_{5})$\\
\end{tabular}
}

It follows from the above that when $V$ is viewed as a $Z/2[[t_{11},t_{13}]]$-module the action is faithful, and each $T_{p}:V\rightarrow V$ is multiplication by some $u$ in $Z/2[[t_{11},t_{13}]]$. Furthermore when $p=5$, $u=\mathrm{unit} (t_{13})$, while when $p=7$, $u=\mathrm{unit} (t_{11})(t_{13})$. So $I(V)$ is generated by 2 elements, $t_{5}+\mathrm{unit} (t_{13})$ and $t_{7}+\mathrm{unit} (t_{11})(t_{13})$.  This gives:

\begin{theorem}
\label{theorem2.7}
$I(V)$ is generated by 2 elements $A$ and $B$ whose leading forms can be taken to be $t_{5}+t_{7}+t_{13}$ and $t_{7}$.
\end{theorem}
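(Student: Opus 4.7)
The plan is to leverage the description of $I(V)$ already sketched in the paragraph preceding the statement---that $I(V)$ is generated by elements $P$ and $Q$ with $P$ of the form $t_{5}+(\mathrm{unit})\cdot t_{13}$ and $Q$ of the form $t_{7}+(\mathrm{unit})\cdot t_{11}t_{13}$, the units lying in $Z/2[[t_{11},t_{13}]]$---and then perform a cosmetic change of generators to obtain the desired leading forms.

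First I would make the identification $I(V)=(P,Q)$ fully rigorous. The crucial input is that $Z/2[[t_{11},t_{13}]]$ acts faithfully on $V$: this follows because $Z/2[[t_{3},t_{5}]]$ acts faithfully on $V$ by Nicolas--Serre \cite{3}, and because $T_{11},T_{13}$ act as $\mathrm{unit}(t_{3})\cdot t_{3}$ and $\mathrm{unit}(t_{5})\cdot t_{5}$ respectively, so the substitution $t_{11}\mapsto \mathrm{unit}(t_{3})\cdot t_{3}$, $t_{13}\mapsto \mathrm{unit}(t_{5})\cdot t_{5}$ is an isomorphism of complete power series rings in two variables (the images form a regular system of parameters). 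Next, writing $\newo=Z/2[[t_{11},t_{13}]][[t_{5},t_{7}]]$ and letting $f,g\in Z/2[[t_{11},t_{13}]]$ be the elements by which $T_{5}$ and $T_{7}$ act on $V$, we have $P=t_{5}+f$ and $Q=t_{7}+g$ in $I(V)$. Since $P,Q,t_{11},t_{13}$ have linearly independent images in $m/m^{2}$, they form a regular system of parameters for $\newo$, so $\newo=Z/2[[t_{11},t_{13}]][[P,Q]]$; the quotient by $(P,Q)$ is $Z/2[[t_{11},t_{13}]]$, which acts faithfully, forcing $I(V)=(P,Q)$.

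Finally, to achieve the claimed leading forms, set $B=Q$ and $A=P+Q$. In characteristic $2$, $(A,B)=(P,Q)=I(V)$ (since $P=A+B$ and $Q=B$), the leading form of $B$ is $t_{7}$, and the leading form of $A$ is $t_{5}+t_{7}+t_{13}$, as required. I expect no serious obstacle here beyond the change-of-variables step presenting $\newo$ as a power series ring in $P$ and $Q$ over $Z/2[[t_{11},t_{13}]]$; this is a standard structural fact about complete regular local rings, and once it is in hand the theorem is immediate.
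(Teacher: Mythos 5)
Your proposal is correct and follows essentially the same route as the paper: the paper's entire argument is the paragraph preceding the theorem, which transfers faithfulness from $Z/2[[t_{3},t_{5}]]$ to $Z/2[[t_{11},t_{13}]]$ via the unit relations from \cite{3}, exhibits the generators $t_{5}+\mathrm{unit}(t_{13})$ and $t_{7}+\mathrm{unit}(t_{11})(t_{13})$, and leaves the change of generators $A=P+Q$, $B=Q$ implicit. You have simply filled in the regular-system-of-parameters justification that the paper takes for granted.
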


To describe $I(W)$ we use the following results from \cite{1}:

\begin{theorem}
\label{theorem2.8}
Let $W1$ and $W5$ be the $Z/2[G^{2}]$-submodules of $W$ generated by $D$ and $D^{2}G$ respectively. (In fact $G=D^{3}$, so that a $Z/2$-basis of $W1$ consists of the $D^{k}$, $k\equiv 1\mod{6}$, while a $Z/2$-basis of $W5$ consists of the $D^{k}$, $k\equiv 5\mod{6}$.)

\begin{enumerate}
\addtolength{\itemsep}{2pt}
\item[(1)] The $T_{p}$, $p\equiv 1\mod{6}$, stabilize $W1$ and $W5$. The $T_{p}$, $p\equiv 5\mod{6}$, map $W1$ to $W5$, and $W5$ to $W1$.
\item[(2)] $W1$ has a basis $\{m_{i,j}\}$ with $m_{0,0}=D$, adapted to $T_{7}$ and $T_{13}$. The same holds for $W5$ with $m_{0,0}=D^{5}=D^{2}G$. It follows that $T_{7}$ and $T_{13}$ act locally nilpotently on $W1$, $W5$ and $W$.
\item[(3)] Taking $S=\{7,13\}$ and making $W$ into a $Z/2[[t_{7},t_{13}]]$-module, we find that each $T_{p}:W\rightarrow W$, $p\equiv 1\mod{6}$, is multiplication by some $u$ in the maximal ideal of $Z/2[[t_{7},t_{13}]]$.
\item[(4)] And each $T_{p}:W\rightarrow W$, $p\equiv 5\mod{6}$, is the composition of $T_{5}$ with multiplication by some $u$ in $Z/2[[t_{7},t_{13}]]$.
\item[(5)] $T_{5}^{2}:W\rightarrow W$ is multiplication by $\lambda^{2}$ for some $\lambda$ in $Z/2[[t_{7},t_{13}]]$ with leading form $t_{7}+t_{13}$.
\end{enumerate}
\end{theorem}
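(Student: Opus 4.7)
For (1), grade $Z/2[[x]]$ by the residue of the exponent modulo~6. Since $F=\sum x^{n^{2}}$ and every $n$ coprime to~$6$ has $n^{2}\equiv 1\mod{6}$, the series $D=\pr(F)$ is homogeneous of degree~$1$; combined with $G=D^{3}$ (so $G^{2}$ has degree~$0$), this identifies $W1$ and $W5$ with the degree-$1$ and degree-$5$ pieces of $W$. The formula $T_{p}f=\sum c_{pn}x^{n}+\sum c_{n}x^{pn}$ sends the degree-$r$ piece into degrees $p^{-1}r$ and $pr$ modulo~6; both equal $r$ when $p\equiv 1\mod{6}$, and both equal $-r$ when $p\equiv 5\mod{6}$, swapping $\{1,5\}$.

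For~(2), the main technical obstacle, I would mimic the Nicolas--Serre construction in \cite{3} with $(T_{7},T_{13})$ in place of $(T_{3},T_{5})$. The input would be explicit formulas for $T_{7}D^{k}$ and $T_{13}D^{k}$ ($k\equiv 1\mod{6}$) as $Z/2[G^{2}]$-combinations of other $D^{j}$, derived from the minimal equation $(F+G)^{4}=FG$ of Theorem~\ref{theorem2.2}(1) and the identity $G=D^{3}$. These should yield a strictly triangular action in a suitable well-ordering of indices, permitting the inductive construction of the $m_{i,j}$ with $m_{0,0}=D$, $T_{7}m_{i+1,j}=m_{i,j}$, and $T_{13}m_{i,j+1}=m_{i,j}$. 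The crux, where I expect the real work to live, is establishing surjectivity at each inductive step---a refinement of Lemma~\ref{lemma2.3}. The same construction starting from $m_{0,0}=D^{5}=D^{2}G$ handles $W5$, and local nilpotence of $T_{7},T_{13}$ on $W$ follows at once from the form of the adapted basis.

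Given~(2), an extension of the Observation after Lemma~\ref{lemma2.3} shows that every $Z/2$-linear endomorphism of $W1$ commuting with $T_{7}$ and $T_{13}$ equals multiplication by a unique element of $Z/2[[t_{7},t_{13}]]$: the adapted basis realizes $W1$ as the Pontryagin dual of $Z/2[[t_{7},t_{13}]]$, whose module endomorphism ring is $Z/2[[t_{7},t_{13}]]$ itself. Applied to $T_{p}|_{W1}$ for $p\equiv 1\mod{6}$ this yields~(3). For~(4), when $p\equiv 5\mod{6}$, $T_{5}T_{p}:W\to W$ is a self-map commuting with $T_{7},T_{13}$, hence multiplication by some $v\in\newo$; solving $T_{p}=T_{5}u$ then reduces to showing $\lambda^{2}$ divides $v$, which is where~(5) is used. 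For~(5), the square structure $T_{5}^{2}=\lambda^{2}$ should emerge by identifying $W1$ and $W5$ via adapted bases based at $D$ and $D^{5}$: under such identifications $T_{5}:W1\to W5$ and $T_{5}:W5\to W1$ become multiplication by $\lambda_{1},\lambda_{2}\in\newo$, so $T_{5}^{2}|_{W1}=\lambda_{1}\lambda_{2}$; a symmetry argument between $W1$ and $W5$ should give $\lambda_{1}=\lambda_{2}=\lambda$. The leading form $t_{7}+t_{13}$ would then be verified by direct low-order computation of $T_{5}^{2}D$ in the adapted basis.
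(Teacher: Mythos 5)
First, a point of reference: the paper itself does not prove Theorem~\ref{theorem2.8} --- it is imported verbatim from \cite{1} (``we use the following results from \cite{1}''), so the only ``proof'' in this paper is a citation, and your proposal has to be judged against the detailed computations in that reference. Your argument for (1) is correct and self-contained: grading by exponent mod $6$, noting that the exponents of $D$ are squares of integers prime to $6$ and hence $\equiv 1\mod 6$, and reading off the effect of $\sum c_{n}x^{n}\mapsto \sum c_{pn}x^{n}+\sum c_{n}x^{pn}$ on the grading is exactly the right mechanism. Part (2) is, as you say, where the real work lives; it occupies much of \cite{1} (explicit recursions for $T_{7}$ and $T_{13}$ on the $D^{k}$ in the spirit of Nicolas--Serre \cite{3}), and your paragraph is a program rather than a proof. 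That is acceptable as a sketch, but the later parts contain genuine logical gaps beyond ``computations omitted.''

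The most serious gap is in (4). You propose to write $T_{5}T_{p}=v$ with $v\in Z/2[[t_{7},t_{13}]]$ and then ``solve $T_{p}=T_{5}u$'' by showing $\lambda^{2}\mid v$. But from $v=\lambda^{2}u$ you only get $T_{5}(T_{p}-T_{5}u)=0$, and $T_{5}$ is \emph{not} injective on $W$: by the very Matlis-duality picture you invoke, multiplication by a nonzero non-unit on a module with an adapted basis is surjective but has nonzero kernel (its kernel is the dual of $R/(u)$; concretely $t_{7}$ kills every $m_{0,j}$), and $T_{5}$ cannot be injective since $T_{5}^{2}=\lambda^{2}$ with $\lambda$ in the maximal ideal. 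So divisibility of $v$ by $\lambda^{2}$ does not yield (4); one needs to work with $T_{p}$ and $T_{5}$ directly as maps $W1\to W5$ and prove a divisibility statement there, which is what the recursions in \cite{1} deliver. A second gap is in (5): after fixing identifications of $W1$ and $W5$ with the dual of $Z/2[[t_{7},t_{13}]]$, you get $T_{5}^{2}|_{W1}=\lambda_{1}\lambda_{2}$, but the ``symmetry argument'' for $\lambda_{1}=\lambda_{2}$ is not justified --- there is no evident involution swapping $W1$ and $W5$ and commuting with $T_{7},T_{13}$, the identifications themselves are non-canonical, and $\lambda_{1}\lambda_{2}$ being a perfect square is strictly weaker than $\lambda_{1}=\lambda_{2}$. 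Finally, in (3) your endomorphism-ring argument a priori produces possibly different multipliers $u$ on $W1$ and on $W5$; to get a single $u$ on all of $W$ you need an extra step (e.g.\ surjectivity of $T_{5}:W1\to W5$, which itself leans on (5)), and this ordering dependency should be made explicit.
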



\begin{remark*}{Remarks}
\label{remarkafter2.8}
(3), (4) and (5) show that each $T_{p}$, $p\ne 3$, acts locally nilpotently on $W$. And if we take $S=\{5,7,13\}$, each $T_{p}:W\rightarrow W$, $p\ne 3$, is multiplication by an element of $Z/2[[t_{5},t_{7},t_{13}]]$. Furthermore if we set $\varepsilon = t_{5}+\lambda$, then $\varepsilon^{2}$ kills $W$. Note that the leading form of $\varepsilon$ is $t_{5}+t_{7}+t_{13}$.
\end{remark*}

\begin{theorem}
\label{theorem2.9}
$I(W)$ is generated by 2 elements $\varepsilon^{2}$ and $C$ where $\varepsilon$ is congruent to the $A$ of Theorem \ref{theorem2.7} mod~$m^{2}$, and the leading form of $C$ is $t_{11}$.
\end{theorem}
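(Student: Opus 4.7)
\medskip
The plan is to take $\varepsilon = t_{5}+\lambda$ from the remarks after Theorem~\ref{theorem2.8}, so that $\varepsilon^{2}\in I(W)$ and $\varepsilon$ has leading form $t_{5}+t_{7}+t_{13}\equiv A$ mod $m^{2}$. For the second generator, Theorem~\ref{theorem2.8}(4) applied with $p=11$ supplies $u_{11}\in R:=Z/2[[t_{7},t_{13}]]$ such that $T_{11}=T_{5}\circ u_{11}$ on $W$; then $C:=t_{11}+t_{5}u_{11}$ lies in $I(W)$.

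The crux is showing that the leading form of $C$ is $t_{11}$, equivalently that $u_{11}$ lies in the maximal ideal $m_{R}$ of $R$. By Theorem~\ref{theorem2.8}(2) the $R$-socles of $W1$ and $W5$ are $Z/2\cdot D$ and $Z/2\cdot D^{5}$ respectively. The $R$-linear map $T_{5}|_{W5}\colon W5\to W1$ sends socle to socle, so $T_{5}(D^{5})=c\cdot D$ for some $c\in Z/2$. A direct expansion $D^{5}=\sum_{k,j\in S_{D}}x^{k^{2}+4j^{2}}$, with $S_{D}=\{k>0\text{ odd},(k,3)=1\}$, shows the coefficient of $x^{1}$ in $T_{5}(D^{5})$ is $1$, so $c=1$. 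Since $T_{11}|_{W5}=T_{5}|_{W5}\circ u_{11}$ and $u_{11}$ acts on the one-dimensional socle as scalar multiplication by $u_{11}(0,0)$, we get $T_{11}(D^{5})=u_{11}(0,0)\cdot D$. A parallel expansion shows $T_{11}(D^{5})=0$: any solution of $k^{2}+4j^{2}=11n$ with $k,j\in S_{D}$ forces $11\mid k$ and $11\mid j$, because $-4$ is a quadratic non-residue mod~$11$. In characteristic~$2$ the resulting $121$-fold repetition causes cancellation $c_{121n'}+c_{n'}=0$ in the coefficient of $x^{11n'}$ of $T_{11}(D^{5})$. Therefore $u_{11}(0,0)=0$, $u_{11}\in m_{R}$, and $C$ has leading form $t_{11}$.

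For the inclusion $I(W)\subseteq(\varepsilon^{2},C)$: eliminating $t_{11}$ via $C$ and substituting $\varepsilon-\lambda$ for $t_{5}$ identifies $\newo/(\varepsilon^{2},C)$ with $R[\varepsilon]/(\varepsilon^{2})$. Suppose $f=f_{0}+\varepsilon f_{1}$ (with $f_{i}\in R$) kills $W$. For $w\in W1$, the $W1$-component of $fw$ is $(f_{0}+\lambda f_{1})w$, so $f_{0}+\lambda f_{1}=0$ by faithful action of $R$ on $W1$ (Observation and Theorem~\ref{theorem2.8}(2)). The $W5$-component is $f_{1}\cdot T_{5}(w)$, so $f_{1}\cdot T_{5}(W1)=0$. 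But $T_{5}^{2}|_{W5}=\lambda^{2}$ is surjective on $W5$ (Observation, since $\lambda^{2}\ne 0$ and $W5$ has an adapted $R$-basis), and its factorization $W5\stackrel{T_{5}}{\to}W1\stackrel{T_{5}}{\to}W5$ forces $T_{5}\colon W1\to W5$ to be surjective; hence $f_{1}\cdot W5=0$, giving $f_{1}=0$ and then $f_{0}=0$.

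The main obstacle is the leading-form claim for $C$, since there is no \emph{a priori} reason for $u_{11}$ to lie in $m_{R}$. It is confirmed by the explicit computation $T_{11}(D^{5})=0$, which rests on the specific arithmetic fact that $11\equiv 3\pmod{4}$; without it, $C$ could have leading form $t_{5}+t_{11}$, and the ideal $(\varepsilon^{2},C)$ would fail to exhaust $I(W)$.
\medskip
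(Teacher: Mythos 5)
Your proof is correct and follows essentially the same route as the paper: the generators $\varepsilon^{2}$ and $C=t_{11}+t_{5}u_{11}$ are produced exactly as in the text, completeness of $(\varepsilon^{2},C)$ is shown by the same decomposition of the action along $W1\oplus W5$ plus faithfulness of $Z/2[[t_{7},t_{13}]]$, and the crux is the same pair of facts $T_{5}(D^{5})=D$ and $T_{11}(D^{5})=0$, which the paper dismisses as ``an easy calculation'' and you carry out explicitly (correctly) via the expansion $D^{5}=\sum x^{k^{2}+4j^{2}}$ and the fact that $-4$ is a quadratic nonresidue mod $11$. Only your closing aside is off: even if $u_{11}$ were a unit, your own completeness argument would still give $I(W)=(\varepsilon^{2},C)$ --- what would fail is only the asserted leading form of $C$.
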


\begin{proof}
First we determine the kernel of the action of $Z/2[[t_{5},t_{7},t_{13}]]$ on $W$. The remark above shows that the kernel contains $(\varepsilon^{2})=(t_{5}^{2}+\lambda^{2})$. If $R$ is in the kernel, we may modify $R$ by an element of this ideal, and assume that $R=u+u^{\prime}t_{5}$ with $u$ and $u^{\prime}$ in $Z/2[[t_{7},t_{13}]]$. Since $u$ stabilizes $W1$ and $W5$ while $u^{\prime}t_{5}$ takes $W1$ to $W5$ and $W5$ to $W1$, $u$ and $\lambda^{2}u^{\prime}$ are elements of $Z/2[[t_{7},t_{13}]]$ annihilating $W$. Since $Z/2[[t_{7},t_{13}]]$ acts faithfully on $W$ (see for example the observation following Lemma \ref{lemma2.3}), $u=u^{\prime}=0$, and the kernel of the action is $(\varepsilon^{2})$; note that $\varepsilon$ has the same leading form as $A$. Finally by (4) of Theorem \ref{theorem2.8}, $I(W)$ contains an element of the form $t_{11}+vt_{5}$ with $v$ in $\newo$. It remains to show that $v$ is not a unit. But an easy calculation shows that $T_{11}(D^{5})=0$ while $T_{5}(D^{5})=D$. 
\qed
\end{proof}

\begin{definition}
\label{def2.10}
$V(0,\star)$ is the kernel of $T_{3}:V\rightarrow V$.
\end{definition}

If $\{m_{i,j}\}$ is a basis of $V$ adapted to $T_{3}$ and $T_{5}$, the $m_{0,j}$ form a $Z/2$-basis of $V(0,\star)$. $Z/2[[t_{5}]]$ acts faithfully and locally nilpotently on $V(0,\star)$. $V(0,\star)$ is an $\newo$-submodule of $V$, and the kernel of the action is a height 3 prime ideal $P$ generated by $t_{7}$, $t_{11}$ and an element with leading form $t_{13}+t_{5}$.

\begin{lemma}
\label{lemma2.11}
$W$, as well as $V$, contains a ``Hecke-submodule'' isomorphic to $V(0,\star)$.
\end{lemma}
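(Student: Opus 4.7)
My plan is to exhibit a concrete Hecke-submodule $M\subset W$ isomorphic to $V(0,\star)$. First, I would pin down the isomorphism class of $V(0,\star)$ by computing its annihilator in $\newo$. From the $T_p$-table on $V$ recalled just before Theorem~\ref{theorem2.7}, both $T_7$ and $T_{11}$ are unit multiples of $t_3$ on $V$ (hence vanish on $V(0,\star)=\ker T_3$), while $T_{13}=u_0 T_5$ for some unit $u_0\in Z/2[[t_5]]$ with constant term $1$. Thus the annihilator is the height-$3$ prime $P=(t_7,t_{11},t_{13}-u_0 t_5)$, $\newo/P\cong Z/2[[t_5]]$, and $V(0,\star)\cong(\newo/P)^\vee$ is the injective hull of the residue field over this DVR.

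The natural candidate is the $P$-torsion submodule $M=W[P]=\{w\in W:Pw=0\}$, which is automatically Hecke-stable because $\newo$ is commutative. To analyze it I would use the decomposition $W=W1\oplus W5$ from Theorem~\ref{theorem2.8}, with each of $W1,W5$ isomorphic to $R^\vee$ over $R=Z/2[[t_7,t_{13}]]$ (by the observation following Lemma~\ref{lemma2.3}), and with $T_5$ given by multiplication by elements $\mu,\mu'\in R$ in the two directions $W1\to W5$ and $W5\to W1$, where $\mu'$ is a unit (forced by $T_5(D^5)=D$) and $\mu\mu'=\lambda^2$. Restricting to $\ker T_7\cap W$ reduces the $R$-action to $Z/2[[t_{13}]]$ on $(Z/2[[t_{13}]]^\vee)^2$, where $T_5$ becomes an explicit $2\times 2$ matrix. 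The two further conditions $T_{11}w=0$ and $(t_{13}-u_0 t_5)w=0$ should cut out a diagonal copy of $Z/2[[t_{13}]]^\vee$, of the form $\{(w_1,\lambda(\mu')^{-1}w_1)\}$; under the identification $t_{13}=u_0 t_5$ this becomes the injective hull of $Z/2$ over $Z/2[[t_5]]$, and so $\cong V(0,\star)$. Nonemptiness is witnessed by the common socle element $D\in W$, which lies in $W[P]$: one checks $T_5D=\lambda^2 D^5=0$ (since $\lambda\in(t_7,t_{13})$ and $D^5$ is the $R$-socle of $W5$), and $T_7D=T_{11}D=T_{13}D=0$ trivially.

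The main obstacle is ensuring that the $T_{11}$-condition on $\ker T_7\cap W$ does not further shrink the diagonal copy to a finite-dimensional piece; this reduces to showing $v\in(t_7)R$, where $v\in R$ is the non-unit with $T_{11}=vT_5$ on $W$, so that $T_{11}$ vanishes identically on $\ker T_7\cap W$. I would establish this either by explicit evaluation of $T_{11}$ against the adapted $T_7,T_{13}$-basis of $W5$ (propagating the given vanishing $T_{11}(D^5)=0$ via the commutation $T_{11}T_7=T_7T_{11}$), or by a direct computation in the modular-forms realization from \cite{1}. Without $v\in(t_7)R$, the $T_{11}$-kernel inside $\ker T_7\cap W$ would collapse to finite dimension and the approach would fail, so this compatibility is the technical crux of the lemma.
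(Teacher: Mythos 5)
Your strategy---locating the submodule intrinsically inside $W$ as the $P$-torsion $W[P]$---is genuinely different from the paper's, which never computes anything inside $W$ at all. The paper observes that for $f\in V(0,\star)$ one has $\tr(f)=0$ (because $U_{3}\comp\tr=T_{3}$ on $V$ by Lemma \ref{lemma2.4} and $U_{3}$ is injective on $N1$), so $V(0,\star)\subset N2$; then $\pr:N2\rightarrow W$ restricted to $V(0,\star)$ is Hecke-equivariant and sends the socle element $m_{0,0}=F$ to $D\ne 0$, hence has zero kernel. That three-line argument transports $V(0,\star)$ into $W$ ready-made and sidesteps every computation you attempt.

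As it stands your proposal has a genuine gap, and it is larger than the one you acknowledge. To get $W[P]\cong V(0,\star)$ rather than a finite-dimensional module you need \emph{two} exact compatibilities, neither of which follows from the results of Theorem \ref{theorem2.8} that you are allowed to quote. The first is the one you flag: $v\in(t_{7})$, where $T_{11}=vT_{5}$ on $W$; the paper only records that $v$ is a non-unit, and $v\in(t_{7})$ is equivalent to $C\in P$, i.e.\ to a piece of Theorem \ref{theorem2.12} --- which is proved \emph{using} Lemma \ref{lemma2.11}, so invoking it would be circular, and your proposed remedies (``explicit evaluation,'' ``direct computation in the modular-forms realization'') are not carried out. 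The second you gloss over: writing the condition $(t_{13}+u_{0}t_{5})w=0$ on $(\ker T_{7}\cap W1)\oplus(\ker T_{7}\cap W5)\cong(Z/2[[t_{13}]]^{\vee})^{2}$ with $T_{5}$ the off-diagonal matrix with entries $\bar\mu,\bar\mu'$, elimination of $w_{5}$ leaves the constraint $(t_{13}^{2}+u_{0}^{2}\bar\mu\bar\mu')\,w_{1}=0$, and since $\bar\mu\bar\mu'=\bar\lambda^{2}$ with $\bar\lambda$ a unit times $t_{13}$, your ``diagonal copy'' is infinite-dimensional only if $u_{0}^{2}\bar\lambda^{2}=t_{13}^{2}$ holds \emph{identically} --- an exact identity between the unit $u_{0}$ coming from the Nicolas--Serre data on $V$ and the reduction of $\lambda$ mod $t_{7}$ on $W$. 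Nothing quoted in the paper gives you either identity a priori; both are consequences of the embedding the lemma asserts, which is why the paper constructs the embedding directly instead.
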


\begin{proof}
If $f\in V(0,\star)$, $T_{3}(f)=U_{3}(\tr(f))=0$. Since $U_{3}:N1\rightarrow V$ is bijective, $\tr(f)=0$ and $f$ is in $N2$. So we have a composite map $V(0,\star)\subset N2\stackrel{\pr}{\rightarrow}W$ which commutes with $T_{p}$, $p\ne 3$, and takes $m_{0,0}=F$ to $D$. Since $m_{0,0}$ is not in the kernel of this map, the kernel is $(0)$, giving the result.
\qed
\end{proof}

\begin{theorem}
\label{theorem2.12}
Let $A,B,C$ be as in Theorems \ref{theorem2.7} and \ref{theorem2.9}. Then $(A,B,C)=I(V)+I(W)=P$, the kernel of the action of $\newo$ on $V(0,\star)$.
\end{theorem}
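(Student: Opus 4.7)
The plan is to establish the chain $(A,B,C) \subseteq I(V) + I(W) \subseteq P$ and then close the loop by proving $(A,B,C) = P$ directly via a leading-form argument. The two inclusions in the chain are essentially formal: Theorem \ref{theorem2.7} gives $I(V) = (A,B)$, and $C$ is one of the two generators of $I(W)$ from Theorem \ref{theorem2.9}, so $(A,B,C) \subseteq I(V) + I(W)$. For the second containment, $V(0,\star)$ is an $\newo$-submodule of $V$, forcing $I(V) \subseteq P$; and by Lemma \ref{lemma2.11}, $V(0,\star)$ embeds Hecke-equivariantly into $W$, so $I(W) \subseteq P$ as well.

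The substantive step is to show $(A,B,C) = P$, and the main work is to certify that $(A,B,C)$ is itself a prime ideal of height $3$. The associated graded ring $\mathrm{gr}_m \newo$ is the polynomial ring $Z/2[t_5,t_7,t_{11},t_{13}]$, and the leading forms of $A,B,C$ are the three linearly independent linear forms $t_5+t_7+t_{13}$, $t_7$, $t_{11}$. They therefore form a regular sequence in $\mathrm{gr}_m \newo$, and by the standard lifting lemma $(A,B,C)$ has height $3$ in $\newo$ and the associated graded of $\newo/(A,B,C)$ equals the polynomial quotient $Z/2[t_5,t_7,t_{11},t_{13}]/(t_5+t_7+t_{13},\, t_7,\, t_{11}) \cong Z/2[t_5]$. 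This is a one-dimensional domain, and since $\newo/(A,B,C)$ is a complete local ring whose associated graded is a domain, $\newo/(A,B,C)$ is itself a domain. Hence $(A,B,C)$ is prime of height $3$.

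To finish: $(A,B,C) \subseteq P$ is a containment of two height-$3$ primes in the four-dimensional regular local ring $\newo$. A strict containment of primes always strictly increases height, so equality must hold, i.e.\ $(A,B,C) = P$. Combined with the chain of inclusions from the first paragraph, this collapses everything to $(A,B,C) = I(V) + I(W) = P$, as required. The main obstacle is the middle step, the leading-form/regular-sequence calculation showing $(A,B,C)$ is prime of height $3$; this is standard commutative algebra but is the only input that goes beyond pure bookkeeping on containments. (As an incidental consequence, $\varepsilon^{2}$ automatically lies in $(A,B,C)$, which is not obvious from the congruence $\varepsilon \equiv A \pmod{m^{2}}$ alone.)
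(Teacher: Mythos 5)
Your proof is correct and follows essentially the same route as the paper's: the chain $(A,B,C)\subseteq I(V)+I(W)\subseteq P$ (using Lemma \ref{lemma2.11} for the embedding of $V(0,\star)$ into $W$), followed by the observation that both ends are height-3 primes in the 4-dimensional regular local ring $\newo$, which forces equality throughout. The only difference is that you spell out the standard leading-form/regular-sequence argument certifying that $(A,B,C)$ is a height-3 prime, a fact the paper asserts without proof.
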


\begin{proof}
Evidently $(A,B,C)\subset I(V)+I(W)$. Since $V$ and $W$ each have an $\newo$-submodule isomorphic to $V(0,\star)$, $I(V)+I(W)\subset P$. Finally $(A,B,C)$ and $P$ are height 3 primes of $\newo$.
\qed
\end{proof}

\begin{theorem}
\label{theorem2.13}
The only $\newo$-linear map $\alpha : W\rightarrow V$ is the zero-map.
\end{theorem}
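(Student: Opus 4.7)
The plan is to show that the element $B \in I(V)$ from Theorem~\ref{theorem2.7} (leading form $t_{7}$) acts surjectively on $W$. Since $B$ annihilates $V$, any $\newo$-linear $\alpha : W \to V$ will then satisfy
\[
\alpha(W) = \alpha(BW) = B\cdot\alpha(W) \subseteq B\cdot V = 0,
\]
giving the result. So the whole proof reduces to establishing $BW = W$.

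To analyze the action of $B$ on $W$ I would pass to $\newo/I(W)$, through which that action factors, and present this ring concretely using Theorem~\ref{theorem2.9}. Since $C$ has leading form $t_{11}$, Weierstrass preparation lets us eliminate $t_{11}$; writing $\varepsilon = t_{5}+\lambda$ with $\lambda \in R := Z/2[[t_{7},t_{13}]]$ then exchanges $t_{5}$ for $\varepsilon$, yielding $\newo/I(W) \cong R[\varepsilon]/(\varepsilon^{2})$. Write the image of $B$ in this presentation as $\bar B = c + d\varepsilon$ with $c,d \in R$. Comparing leading forms in the maximal ideal of $\newo/I(W)$ modulo its square (with basis $\bar\varepsilon,\bar t_{7},\bar t_{13}$) shows that $c \equiv t_{7} \mod{m_{R}^{2}}$ and $d \in m_{R}$; in particular $c \ne 0$ in $R$.

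Next, since $\varepsilon^{2}=0$ in $\newo/I(W)$ and we are in characteristic $2$, a direct computation gives $\bar B^{2} = c^{2} + 2cd\varepsilon + d^{2}\varepsilon^{2} = c^{2}$. By Theorem~\ref{theorem2.8}(2), each of $W1$ and $W5$ has a basis adapted to $T_{7}$ and $T_{13}$ over $R$; so by the Observation following Lemma~\ref{lemma2.3}, multiplication by any nonzero element of $R$ is surjective on $W1$ and on $W5$, hence on $W = W1 \oplus W5$. Applied to $c^{2}$, this yields $c^{2}W = W$, so $BW \supseteq B^{2}W = c^{2}W = W$.

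The step I expect to be the main obstacle is the algebraic bookkeeping of the second paragraph: isolating $B$ modulo $I(W)$ in the form $c + d\varepsilon$ and verifying that the leading form $t_{7}$ of $B$ in $\newo$ really descends to a nonzero leading form of $c$ in $R$ (the danger being that the reductions modulo $C$ and $\varepsilon^{2}$ somehow absorb the $t_{7}$ part). Once $\bar B^{2} = c^{2}$ with $c$ a nonzero element of $R$, the Observation following Lemma~\ref{lemma2.3} does all the real work.
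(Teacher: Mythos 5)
Your proof is correct, but it takes a different route from the paper's, and the comparison is instructive. Both arguments share the same skeleton: exhibit an element of $\newo$ that acts surjectively on $W$ and annihilates $\alpha(W)$. The paper uses $t_{7}$: surjectivity on $W=W1\oplus W5$ is immediate from the adapted bases and the Observation, while the vanishing of $t_{7}$ on $\alpha(W)$ is where the work goes --- it requires Theorem \ref{theorem2.12}, since $\alpha(W)$ is killed by $I(V)+I(W)=P\ni t_{7}$. You instead use $B$: the vanishing on $\alpha(W)\subseteq V$ is free since $B\in I(V)$, and the work goes into surjectivity, via the presentation $\newo/I(W)\cong Z/2[[t_{7},t_{13}]][\varepsilon]/(\varepsilon^{2})$, the identity $\bar{B}^{2}=c^{2}$ with $c\equiv t_{7}$ mod $m_{R}^{2}$, and then the same Observation. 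Your bookkeeping in the second paragraph is sound: $C$ has leading form $t_{11}$ (indeed $C=t_{11}+vt_{5}$ with $v$ a non-unit), so eliminating $t_{11}$ and substituting $t_{5}=\varepsilon+\lambda$ is legitimate, and since the linear part of $B$ involves neither $t_{5}$ nor $t_{11}$, the $\varepsilon$-coefficient $d$ lands in $m_{R}$ and $c$ picks up the leading term $t_{7}$. The net effect is that your proof bypasses Theorem \ref{theorem2.12} (and its height-three primality argument) entirely, at the cost of an explicit computation in $\newo/I(W)$; the paper's proof is shorter given that Theorem \ref{theorem2.12} is already in hand and is reused elsewhere (e.g.\ in Theorem \ref{theorem2.14}).
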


\begin{proof}
$\alpha(W)$ is annihilated both by $I(W)$ and $I(V)$. So by Theorem \ref{theorem2.12} it is annihilated by $P$, and thus by $t_{7}$. Then $\alpha (t_{7}W)=t_{7}\alpha(W)=(0)$. But since $W1$ and $W5$ have bases adapted to $T_{7}$ and $T_{13}$, $t_{7}(W)=W$, and $\alpha(W)=(0)$.
\qed
\end{proof}

\begin{theorem}
\label{theorem2.14}
If $p\ne 3$, $T_{p}:\modd\rightarrow\modd$ is multiplication by some $u$ in $\newo$.
\end{theorem}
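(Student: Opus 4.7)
The plan is to produce $u \in \newo$ such that $\phi := T_p - u$ vanishes on $\modd$, exploiting the Hecke-stable filtration $\modd \supset N2 \supset N1 \supset (0)$ from Theorem \ref{theorem2.5} whose successive quotients are $V$, $W$, $V$. By Theorem \ref{theorem2.7} and the remark following Theorem \ref{theorem2.8}, $T_p$ acts on $V$ as multiplication by some $u_V \in \newo$ (well-defined mod $I(V)$) and on $W$ as multiplication by some $u_W \in \newo$ (well-defined mod $I(W)$). A single $u \in \newo$ with $u \equiv u_V \mod{I(V)}$ and $u \equiv u_W \mod{I(W)}$ exists iff $u_V - u_W \in I(V) + I(W) = P$ (by Theorem \ref{theorem2.12}). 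Lemma \ref{lemma2.11} realizes $V(0,\star)$ as a Hecke-submodule of both $V$ and $W$, and by the discussion after Definition \ref{def2.10} the $\newo$-action on $V(0,\star)$ has kernel exactly $P$; so $u_V$ and $u_W$ both induce the same action of $T_p$ on $V(0,\star)$, forcing $u_V \equiv u_W \mod P$. Choose $u$ accordingly.

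Now $\phi = T_p - u$ is an $\newo$-linear endomorphism of $\modd$. The subspace $V \subset \modd$ (spanned by $F^k$ with $k$ odd) is a Hecke-stable $\newo$-submodule on which the action factors through $\newo/I(V)$; hence $T_p$ and $u$ both act on $V$ as multiplication by $u_V$, and $\phi|_V = 0$. By construction $\phi$ also annihilates the three graded pieces of the filtration, so $\phi(N2) \subset N1$ and $\phi(N1) = 0$; the induced $\newo$-linear map $N2/N1 = W \to N1 = V$ is zero by Theorem \ref{theorem2.13}, yielding $\phi(N2) = 0$.

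It remains to see $\modd = V + N2$. The composite $V \hookrightarrow \modd \twoheadrightarrow \modd/N2 \stackrel{\sim}{\to} V$, the last arrow being $U_{3}\comp\tr$ (from Theorem \ref{theorem2.5}), equals $T_3 : V \to V$ by Lemma \ref{lemma2.4}, and $T_3$ is surjective by Lemma \ref{lemma2.3}. Thus $V + N2 = \modd$, and $\phi(\modd) = \phi(V) + \phi(N2) = 0$; so $T_p = u$, multiplication by $u \in \newo$. The main technical obstacle is the compatibility $u_V \equiv u_W \mod P$ in the first paragraph; the neat observation is that $V \subset \modd$ combined with $V + N2 = \modd$ bypasses any need for a $V \to W$ analog of Theorem \ref{theorem2.13}.
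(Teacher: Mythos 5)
Your proposal is correct and follows essentially the same route as the paper: glue $u_V$ and $u_W$ into a single $u$ via the embeddings of $V(0,\star)$ into $V$ and $W$ (Lemma \ref{lemma2.11}) and the identity $I(V)+I(W)=P$ (Theorem \ref{theorem2.12}), kill the induced map $N2/N1\rightarrow N1$ with Theorem \ref{theorem2.13}, and finish using the fact that $V$ spans $\modd/N2$. The paper's proof is the same argument phrased as ``modify $u$ and $u'$ so that $u=u'$'' rather than as a congruence/gluing condition.
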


\begin{proof}
We know that $T_{p}:V\rightarrow V$ and $W\rightarrow W$ are multiplication by some  $u$ and $u^{\prime}$ in $\newo$; for $W$ see the remarks following Theorem \ref{theorem2.8}. Then $f\rightarrow T_{p}(f)-uf$ and $f\rightarrow T_{p}(f)-u^{\prime}f$ both annihilate $V(0,\star)$ by Lemma \ref{lemma2.11}. So $u-u^{\prime}$ is in $P$, and by Theorem \ref{theorem2.12} it is in $(A,B,C)$. Modifying $u$ by an element of $(A,B)$, and $u^{\prime}$ by an element of $(C)$ we may assume $u-u^{\prime}=0$. Let $\alpha :\modd\rightarrow\modd$ be the map $f\rightarrow T_{p}(f)-uf$. We'll show that $\alpha$ is the zero-map.

$\alpha$ annihilates $V$, and since $u=u^{\prime}$, it annihilates $W$. Since $\alpha$ commutes with $U_{3}$ and $\pr$ it annihilates $N1$ and $N2/N1$. So $\alpha(N2)\subset N1$, and $\alpha$ induces an $\newo$-linear map $N2/N1\rightarrow N1$. By Theorem \ref{theorem2.13} this is the zero-map, and $\alpha(N2)=(0)$. But the proof of Theorem \ref{theorem2.5} shows that the elements of $V$ span $\modd/N2$. Since $\alpha(V)=0$, $\alpha=0$.
\qed
\end{proof}

\begin{theorem}
\label{theorem2.15}
There are $A,B,C$ in $\newo$ with leading forms $t_{5}+t_{7}+t_{13}$, $t_{7}$ and $t_{11}$ such that $I(V)=(A,B)$ and $I(W)=(A^{2},C)$. The kernel, $I$, of the action of $\newo$ on $\modd$ is $I(V)\cap I(W)=(A^{2},AC,BC)$.
\end{theorem}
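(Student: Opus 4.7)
The plan is to construct $A,B,C$ from the generators supplied by Theorems~\ref{theorem2.7} and \ref{theorem2.9}, establish $I=I(V)\cap I(W)$ via the filtration of $\modd$, and then compute this intersection by commutative algebra.

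For the construction, write $I(V)=(A_{0},B_{0})$ and $I(W)=(\varepsilon^{2},C_{1})$ with $\varepsilon\equiv A_{0}\mod{m^{2}}$. The aim is to find $A\in I(V)$ of leading form $t_{5}+t_{7}+t_{13}$ with $A^{2}\in I(W)$. The key observation is that $(\varepsilon,C_{1})$ is prime: its two generators have linearly independent leading forms, so $R:=\newo/(\varepsilon,C_{1})$ is a two-dimensional regular local ring (in particular a UFD), giving $\sqrt{I(W)}=(\varepsilon,C_{1})$; it then suffices to find $A\in I(V)\cap(\varepsilon,C_{1})$ with the correct leading form. Since $I(V)+(\varepsilon,C_{1})=I(V)+I(W)=P$ has height three, the image $\overline{I(V)}\subset R$ has height one; since $\overline{B_{0}}\in\overline{I(V)}$ is irreducible in $R$ (its leading form $t_{7}$ is nonzero in $m_{R}/m_{R}^{2}$), the UFD structure forces $\overline{I(V)}=(\overline{B_{0}})$. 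Hence $\overline{A_{0}}=\overline{h}\cdot\overline{B_{0}}$ for some $h\in\newo$ that may be chosen in $m$ by comparing degree-one parts. Setting $A:=A_{0}-hB_{0}$, $B:=B_{0}$, $C:=C_{1}$, the equality $I(V)=(A,B)$ is immediate, and $I(W)=(A^{2},C)$ follows from the characteristic-two identity $\varepsilon^{2}=A^{2}+(A-\varepsilon)^{2}$: writing $A-\varepsilon=f_{1}\varepsilon+g_{1}C$ with $f_{1},g_{1}\in m$ (forced by $A-\varepsilon\in m^{2}$) gives $(A-\varepsilon)^{2}=f_{1}^{2}\varepsilon^{2}+g_{1}^{2}C^{2}$, so $(1+f_{1}^{2})\varepsilon^{2}\in(A^{2},C)$ with $1+f_{1}^{2}$ a unit.

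For $I=I(V)\cap I(W)$, the inclusion $I\subseteq I(V)\cap I(W)$ is trivial. Conversely, take $u\in I(V)\cap I(W)$. Annihilation on the consecutive quotients of $\modd\supset N2\supset N1\supset(0)$, identified by Theorem~\ref{theorem2.5} with $V$, $W$, $V$, yields $u\modd\subset N2$, $uN2\subset N1$, $uN1=0$. The induced $\newo$-linear map $W\cong N2/N1\rightarrow N1\cong V$ is zero by Theorem~\ref{theorem2.13}, so $uN2=0$. Since the level-one subspace $V\subset\modd$ is Hecke-stable with $uV=0$ there, and since the proof of Theorem~\ref{theorem2.5} shows $V+N2=\modd$, every $f\in\modd$ writes as $v+n$ with $v\in V$ and $n\in N2$, giving $uf=0$.

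Finally, $(A,B)\cap(A^{2},C)=(A^{2},AC,BC)$ is a commutative-algebra computation. The $\supseteq$ inclusion is immediate; for $\subseteq$, take $r=fA+gB=hA^{2}+kC$. In the three-dimensional regular local ring $\newo/(A)$, the classes $\overline{B},\overline{C}$ have linearly independent leading forms and form a regular sequence; combining $\overline{g}\,\overline{B}=\overline{k}\,\overline{C}$ with $\overline{B}$ being a non-zero-divisor modulo $\overline{C}$ gives $g=k_{1}C+jA$ for some $k_{1},j\in\newo$. Substituting yields $r=(f+jB)A+k_{1}BC$, so $(f+jB)A\in(A^{2},C)$; reducing this modulo $C$ in $\newo/(C)$, where $A$ retains a nonzero leading form and is thus a non-zero-divisor, produces $f+jB=h'A+k_{2}C$, whence $r=h'A^{2}+k_{2}AC+k_{1}BC\in(A^{2},AC,BC)$. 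The main obstacle is the construction of $A$: the UFD argument in $\newo/(\varepsilon,C_{1})$ is what enables the modification of $A_{0}$ to land simultaneously in $I(V)$ and $\sqrt{I(W)}$ without disturbing its leading form, and it relies on $I(V)+I(W)$ being the full height-three prime $P$.
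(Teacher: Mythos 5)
Your proposal is correct, and its overall architecture --- normalize $A$ so that $I(W)=(A^{2},C)$, reduce $I=I(V)\cap I(W)$ to Theorem~\ref{theorem2.13} via the filtration $\modd\supset N2\supset N1\supset(0)$, then compute the intersection --- matches the paper's. The one step where you genuinely diverge is the normalization of $A$. The paper's device is shorter: since $A^{2}\in I(V)$ and $\varepsilon^{2}\in I(W)$, the characteristic-two identity $A^{2}-\varepsilon^{2}=(A-\varepsilon)^{2}$ puts $(A-\varepsilon)^{2}$ in $I(V)+I(W)=(A,B,C)$, which is prime by Theorem~\ref{theorem2.12}; hence $A-\varepsilon\in(A,B,C)\cap m^{2}=mA+mB+mC$ (by linear independence of the leading forms), and one absorbs the $mA+mB$ part into $A$ and the $mC$ part into $\varepsilon$ to get $A=\varepsilon$ exactly, after which $I(W)=(A^{2},C)$ is immediate. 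Your route instead passes to the two-dimensional regular local ring $\newo/(\varepsilon,C)$ and uses unique factorization to write $\overline{A_{0}}=\overline{h}\,\overline{B_{0}}$, which forces $A:=A_{0}-hB_{0}$ into $(\varepsilon,C)$, and then a Frobenius computation converts $A\in(\varepsilon,C)$ into $(A^{2},C)=(\varepsilon^{2},C)$. Both arguments rest on the same essential input, namely Theorem~\ref{theorem2.12} identifying $I(V)+I(W)$ with the height-three prime $P$; yours costs a little more machinery (UFD structure, height counts) but leaves $\varepsilon$ untouched, while the paper exploits the prime-plus-Frobenius trick to finish in two lines. Your spelled-out verification of $(A,B)\cap(A^{2},C)=(A^{2},AC,BC)$ via regular sequences in $\newo/(A)$ and $\newo/(C)$ is a welcome addition, since the paper asserts this equality without proof; the module-theoretic half of your argument is identical to the paper's.
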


\begin{proof}
Let $A,B,C,\varepsilon$ be as in Theorems \ref{theorem2.7} and \ref{theorem2.9}; $I(V)=(A,B)$, $I(W)=(\varepsilon^{2},C)$ and $A-\varepsilon$ is in $m^{2}$. Then $A^{2}-\varepsilon^{2}$ is in $I(V)+I(W)$ which is $(A,B,C)$ by Theorem \ref{theorem2.12}. Since $(A,B,C)$ is prime, $A-\varepsilon$ is in $(A,B,C)\cap m^{2}=mA+mB+mC$. Modifying $A$ by an element of $mA+mB$, and $\varepsilon$ by an element of $mC$ we may assume $A-\varepsilon=0$. Then $I(V)=(A,B)$, $I(W)=(A^{2},C)$ and it follows that $I(V)\cap I(W)=(A^{2},AC,BC)$. Suppose $u$ is in $I(V)\cap I(W)$. Let $\alpha :\modd\rightarrow\modd$ be multiplication by $u$. Then $\alpha(N1)=(0)$, $\alpha(N2)\subset N1$, and arguing as in the final paragraph of the proof of Theorem \ref{theorem2.14} we get the result; $u$ is in $I$.
\qed
\end{proof}

\section{$\bm\modd $ in level $\bm\Gamma_{0}(5)$}
\label{section3}

Throughout this section $\pr : Z/2[[x]]\rightarrow Z/2[[x]]$ is the map $\sum c_{n}x^{n}\rightarrow \sum_{(n,5)=1} c_{n}x^{n}$, $G$ is $F(x^{5})$ and $D=\pr(F)$.

\begin{definition}
\label{def3.1}
$U_{5} : Z/2[[x]]\rightarrow Z/2[[x]]$ is the map $\sum c_{n}x^{n}\rightarrow \sum c_{5n}x^{n}$. $\modd\subset Z/2[[x]]$ is spanned by the $F^{i}G^{j}$, $i,j\ge 0$, $i+j$ odd.
\end{definition}

As was shown in \cite{2}, there is an interpretation of $\modd$ in terms of modular forms of level $\Gamma_{0}(5)$ that shows that the $T_{p}$, $p\ne 5$, stabilize it. It is also stabilized by $U_{5}$, but we'll only need the obvious fact that $U_{5}$ maps the space spanned by the $G^{k}$, $k$ odd, bijectively to $V$, and that this map commutes with $T_{p}$, $p\ne 5$. The following are proved in \cite{2}:

\begin{theorem}
\label{theorem3.2}\hspace{2em}
\begin{enumerate}
\addtolength{\itemsep}{2pt}
\item[(1)] $F$ has degree 6 over $Z/2(G)$ and $(F+G)^{6}=FG$.
\item[(2)] As $Z/2[G^{2}]$-module, $\modd$ has basis $\{G,F,F^{2}G,F^{3},F^{4}G,F^{5}\}$.
\item[(3)] The trace map $Z/2(F,G)\rightarrow Z/2(G)$ takes $G,F,F^{2}G,F^{3},F^{4}G,F^{5}$ to $0,0,0,0,0,G$.  So it gives a $Z/2[G^{2}]$-linear map $\tr : \modd\rightarrow\modd$. The kernel $N2$ and image $N1$ of $\tr$ have $Z/2[G^{2}]$-bases $\{G,F,F^{2}G,F^{3},F^{4}G\}$ and $\{G\}$.
\item[(4)] The filtration $\modd\supset N2\supset N1 \supset (0)$ of $\modd$ is ``Hecke-stable.''  I.e., the $T_{p}$, $p\ne 5$, stabilize $N2$ and $N1$.
\item[(5)] The image, $W$, of $N2$ under $\pr$ has $Z/2[G^{2}]$-basis $\{D,D^{8}/G,D^{2}G,D^{4}G\}$. $N1$ is the kernel of $\pr : N2\rightarrow W$, and so the $T_{p}$, $p\ne 5$, stabilize $W$, and the isomorphism $N2/N1\rightarrow W$ commutes with $T_{p}$, $p\ne 5$.
\end{enumerate}
\end{theorem}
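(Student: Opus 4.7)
The plan is to mirror the level~$\Gamma_0(3)$ blueprint of Theorem~\ref{theorem2.2}, with the degree of $F$ over $Z/2(G)$ replaced by $6$ (the index of $\Gamma_0(5)$ in the full modular group). All five parts rest on two inputs: the algebraic identity $(F+G)^6 = FG$ in $Z/2[[x]]$, and the modular-forms interpretation of $\modd$ established in \cite{2}.

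The crux is part~(1). Since $F$ is the mod~$2$ reduction of $\Delta$ and $G = F(x^5)$ that of $\Delta(q^5)$, the pair $(F,G)$ satisfies the mod~$2$ shadow of the bivariate relation between $\Delta$ and $\Delta(q^5)$ on the genus-$0$ modular curve $X_0(5)$. The Atkin--Lehner involution $w_5$ forces this relation to be symmetric in $F$ and $G$, and matching a bounded number of $q$-expansion coefficients then pins it down to $(F+G)^6 + FG = 0$. Expanding in characteristic~$2$, $F$ satisfies
$$P(X) = X^6 + G^2 X^4 + G^4 X^2 + G X + G^6 \in Z/2(G)[X].$$
To conclude $[Z/2(F,G) : Z/2(G)] = 6$, I would exhibit six distinct conjugates of $F$ in the style of Lemma~\ref{lemma2.4}, obtained by substitutions $x \mapsto \zeta x$ for $\zeta^5 = 1$ together with a mod~$2$ analogue of $w_5$ swapping the roles of $F$ and $G$, and verify each is a root of $P$; then $P$ is irreducible and is the minimal polynomial.

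Given part~(1), parts (2)--(4) are essentially formal. For (2), $\{1, F, \ldots, F^5\}$ is a field-theoretic basis, and multiplying each $F^k$ by $G^{\epsilon_k}$ with $\epsilon_k \in \{0,1\}$ chosen so that $k + \epsilon_k$ is odd yields the asserted $Z/2[G^2]$-basis of $\modd$; spanning follows from the division algorithm against $P$. For (3), the elementary symmetric functions of the six conjugates of $F$ are read off from the coefficients of $P$, and Newton's identities then give the traces of $1, F, \ldots, F^5$; multiplying by the appropriate $G^{\epsilon_k}$ yields the stated values $(0,0,0,0,0,G)$. Part~(4) is inherited from the modular-forms interpretation: the trace corresponds on the modular side to $1 + w_5$, which commutes with $T_p$ for $p \ne 5$.

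Part~(5) is the most delicate, and I expect it to be the main obstacle. One computes $\pr$ on each basis element of $N2$: $\pr(G) = 0$ by inspection, $\pr(F) = D$ by definition, and the other four images must be simplified using $P$ together with the definitions of $F$, $G$, and $D$. The appearance of $D^8/G$ in the claimed basis (a well-defined element of $Z/2[[x]]$ since $v_x(D^8) = 8 \ge 5 = v_x(G)$) must be identified as the $\pr$-image of a specific basis vector from part~(2). To verify $\{D, D^8/G, D^2 G, D^4 G\}$ spans $W = \pr(N2)$, I would use a $Z/2[G^2]$-rank count: $N2$ has rank $5$ and $\ker(\pr|_{N2}) = N1$ has rank $1$ (basis $\{G\}$), so $W$ has rank $4$, matching the size of the candidate basis; linear independence then follows from a leading-term comparison in $x$.
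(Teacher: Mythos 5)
The paper does not actually prove Theorem~\ref{theorem3.2}: it is quoted verbatim from \cite{2} (just as Theorem~\ref{theorem2.2} is quoted from \cite{1}), so there is no internal proof to compare against. Judged on its own terms, your outline correctly identifies the skeleton of the argument in \cite{2} -- the sextic relation $(F+G)^{6}=FG$, the resulting minimal polynomial $P$, the basis and trace computations via Newton's identities (which do come out to $(0,0,0,0,0,G)$ in characteristic $2$, since $P$ is separable with $P'=G$), and the modular-forms origin of Hecke-stability. Two points, however, are genuine gaps rather than omitted routine detail.

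First, in part (5) your rank count is circular as stated: you use ``$\ker(\pr|_{N2})=N1$ has rank $1$'' to deduce that $W$ has rank $4$, but the equality $\ker(\pr|_{N2})=N1$ is itself one of the assertions of (5). Only the inclusion $N1\subset\ker$ is free (since $\pr(G)=0$ and $\pr$ is $Z/2[G^{2}]$-linear). The non-circular order is: compute the $\pr$-images of four elements of $N2$, prove those images $Z/2[G^{2}]$-independent by the leading-term comparison you mention (the orders $1,3,7,9$ of $D,D^{8}/G,D^{2}G,D^{4}G$ are distinct mod $10$), and only then conclude both that $W$ is free of rank $4$ and that the kernel is exactly $N1$. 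Second, you leave unidentified which element of $N2$ maps to $D^{8}/G$, and this is where the real computation lives: $\pr(F^{3})$ is \emph{not} one of the four listed generators; one must first change basis, replacing $F^{3}$ by $F(F+G)^{2}=F^{3}+G^{2}F$, and then use the identity $F(F+G)^{2}=F(F+G)^{8}/FG=F^{8}/G+G^{7}$ (a consequence of $(F+G)^{6}=FG$, and recorded in the paper's remark following the theorem) together with the characteristic-$2$ fact that $D^{2^{k}}=D(x^{2^{k}})$ has no exponents divisible by $5$, so that $\pr(F^{8}/G+G^{7})=D^{8}/G$. Without this identity the asserted basis of $W$ cannot be verified, so I would regard it as the missing key step of your part (5).
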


\begin{remark*}{Remarks}
\label{remarkafter3.2}
Note that $\pr$ takes the elements $F$, $F^{2}G$ and $F^{4}G$ of $N2$ to $D$, $D^{2}G$ and $D^{4}G$.  Also $F(F+G)^{2}= F(F+G)^{8}/FG= (F^{8}/G)+G^{7}$; $\pr$ takes this to $D^{8}/G$. Besides the bijection $N2/N1\rightarrow W$ commuting with $T_{p}$, $p\ne 5$, we have the bijection $U_{5}:N1\rightarrow V$ commuting with these $T_{p}$. Finally there is a composite bijection $\modd/N2\stackrel{\tr}{\rightarrow} N1\stackrel{U_{5}}{\rightarrow} V$; we'll show that this too commutes with $T_{p}$.
\end{remark*}

\begin{lemma}
\label{lemma3.3}
$T_{5}:V\rightarrow V$ is onto.
\end{lemma}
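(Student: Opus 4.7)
The plan is to mimic exactly the proof of Lemma \ref{lemma2.3}, which handled the analogous statement for $T_3$. The key input is the Nicolas--Serre result from \cite{3} that $V$ has a $Z/2$-basis $\{m_{i,j}\}$ with $m_{0,0}=F$ which is adapted to \emph{both} $T_3$ and $T_5$ simultaneously. Adapted means that each of these operators acts as a shift on the index pair: $T_3$ sends $m_{i+1,j}\mapsto m_{i,j}$ (with $m_{0,j}\mapsto 0$), and symmetrically $T_5$ sends $m_{i,j+1}\mapsto m_{i,j}$ (with $m_{i,0}\mapsto 0$).

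Given this, every basis element $m_{i,j}$ is in the image of $T_5$, since $T_5(m_{i,j+1}) = m_{i,j}$. Because the $m_{i,j}$ span $V$ over $Z/2$ and $T_5$ is $Z/2$-linear, $T_5(V)=V$, which is exactly what we need.

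The argument is essentially one line, and no genuine obstacle arises: the level $\Gamma_0(3)$ versus level $\Gamma_0(5)$ distinction is irrelevant here, because Lemma \ref{lemma3.3} is a statement purely about $V$ (the level one object spanned by the $F^k$, $k$ odd) and about the formal Hecke operators $T_3$ and $T_5$, neither of which sees the level. The only point worth flagging is that we are invoking Nicolas--Serre with the roles of $T_3$ and $T_5$ interchanged compared to Lemma \ref{lemma2.3}, which is harmless since \cite{3} treats the two operators on an equal footing in the adapted basis.
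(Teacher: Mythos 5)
Your proof is correct and is exactly the paper's argument: the paper's proof of Lemma \ref{lemma3.3} likewise invokes the adapted basis $\{m_{i,j}\}$ from Lemma \ref{lemma2.3} and observes that $T_{5}$ takes $m_{i,j+1}$ to $m_{i,j}$, hence is onto.
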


\begin{proof}
If $m_{i,j}$ are as in Lemma \ref{lemma2.3}, $T_{5}$ takes $m_{i,j+1}$ to $m_{i,j}$.
\qed
\end{proof}
\pagebreak

\begin{lemma}
\label{lemma3.4}
The composite map $V\stackrel{\tr}{\rightarrow} N1\stackrel{U_{5}}{\rightarrow} V$ is $T_{5}$, and so by Lemma \ref{lemma3.3}, is onto.
\end{lemma}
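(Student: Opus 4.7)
My plan is to mimic the argument of Lemma \ref{lemma2.4} almost verbatim, with the degree-4 extension of $Z/2(G)$ and cube roots of unity there replaced by a degree-6 extension and fifth roots of unity. The key identity that drives Lemma \ref{lemma2.4} is the vanishing of $(X+Y)^{4}+XY$ on $(F(x),F(x^{3}))$; here Theorem \ref{theorem3.2}(1) gives the analogous relation $(F+G)^{6}+FG=0$, so I would set $U(X,Y)=(X+Y)^{6}+XY$ and use it to locate all conjugates of $F$ over $Z/2(G)$.

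The first step is to produce all six roots of the minimal polynomial $U(X,G)$. Substituting $x\mapsto x^{5}$ in $U(F,G)=0$ and using the symmetry of $U$ yields $U(F(x^{25}),G)=0$, so $F(x^{25})$ is a conjugate of $F$. For the remaining four, I would fix an $r$ with $r^{5}=1$ in the field of 16 elements (which is where 5th roots of unity live, since $5\mid 2^{4}-1$). Because $G(rx)=F(r^{5}x^{5})=G$, the substitution $x\mapsto rx$ in $U(F,G)=0$ gives $U(F(rx),G)=0$; so each of the four primitive fifth roots of unity contributes a further conjugate $F(rx)$. Together with $F$ itself this accounts for all six roots.

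The second step is to compute $\tr(F^{k})$ as the sum of $F^{k}$ over these six conjugates and then apply $U_{5}$. Writing $F^{k}=\sum c_{n}x^{n}$, the only non-routine input is the character-sum identity $\sum_{r^{5}=1}r^{n}$ being $1$ if $5\mid n$ and $0$ otherwise (which holds in characteristic 2 because 5 is odd). After the two copies of $F^{k}$ cancel, this gives $\tr(F^{k})=F^{k}(x^{25})+\sum_{5\mid n}c_{n}x^{n}$. Applying $U_{5}$ produces $F^{k}(x^{5})+\sum_{m}c_{5m}x^{m}$, which is exactly $T_{5}(F^{k})$ by the definition of the formal Hecke operator. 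Since the $F^{k}$ with $k$ odd span $V$, this proves $U_{5}\comp\tr=T_{5}$ on $V$; the surjectivity claim then follows from Lemma \ref{lemma3.3}.

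I do not anticipate a genuine obstacle: every step is either a direct analogue of the level-3 argument or straightforward bookkeeping. The main care is identifying the correct ambient field for the fifth roots of unity and verifying that the characteristic-2 arithmetic still cleanly separates the trace into the two terms above the way it did in the proof of Lemma \ref{lemma2.4}.
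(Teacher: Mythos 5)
Your proposal is correct and is essentially the paper's own proof, which simply says to argue as in Lemma \ref{lemma2.4} with $U=(X+Y)^{6}+XY$ and with the conjugates of $F$ over $Z/2(G)$ being $F(x^{25})$ and the $F(rx)$, $r^{5}=1$. Your additional details (locating the fifth roots of unity in the field of 16 elements and the character-sum computation of the trace) are accurate fillings-in of the same argument.
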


\begin{proof}
We argue as in Lemma \ref{lemma2.4}. Now, however, $U$ is $(X+Y)^{6}+XY$, and the conjugates of $F$ over $Z/2(G)$ are $F(x^{25})$ and the $F(rx)$ where $r^{5}=1$. The argument is otherwise unchanged.
\qed
\end{proof}

\begin{theorem}
\label{theorem3.5}
The composite bijection $\modd/N2\stackrel{\tr}{\rightarrow} N1\stackrel{U_{5}}{\rightarrow} V$ commutes with $T_{p}$, $p\ne 5$. Together with the remarks following Theorem \ref{theorem3.2}, this shows that $\modd/N2$, $N2/N1$ and $N1$ identify with $V$, $W$ and $V$ as Hecke-modules.
\end{theorem}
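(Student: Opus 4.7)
The plan is to mimic exactly the strategy of Theorem~\ref{theorem2.5}, since the setup in level $\Gamma_0(5)$ parallels the level $\Gamma_0(3)$ setup. First I would observe that $V$, being spanned by the $F^k$ with $k$ odd, is a subspace of $\modd$, so I can look at the image of $V$ under the projection $\modd \to \modd/N2$. I then apply Lemma~\ref{lemma3.4}, which identifies the restriction of the composite bijection $\modd/N2 \stackrel{\tr}{\to} N1 \stackrel{U_5}{\to} V$ to $V$ with $T_5:V\to V$. Since Lemma~\ref{lemma3.3} says $T_5:V\to V$ is onto, and since the composite $\modd/N2 \to V$ is a bijection to begin with, this forces the elements of $V$ to span $\modd/N2$.

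Next, on $V$ the composite map equals $T_5$, and $T_5$ commutes with every $T_p$, $p\ne 5$, by commutativity of the formal Hecke operators. Hence the composite bijection commutes with $T_p$ on the subspace $V$. Since $\tr$, $\pr$, $U_5$ and each $T_p$ are $Z/2$-linear, and since $V$ spans $\modd/N2$, this commutation propagates to all of $\modd/N2$; this gives the first assertion.

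For the second assertion, I assemble three Hecke-module isomorphisms: the bijection $\modd/N2 \to V$ just constructed, the bijection $N2/N1 \to W$ from (5) of Theorem~\ref{theorem3.2}, and the bijection $U_5:N1\to V$ noted in the remarks after Theorem~\ref{theorem3.2}. All three commute with $T_p$ for $p\ne 5$, which identifies the three successive quotients with $V$, $W$, $V$ as Hecke-modules.

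There is no real obstacle here beyond invoking the preceding lemmas; the substantive work lies in Lemma~\ref{lemma3.4}, whose proof relies on identifying the six conjugates of $F$ over $Z/2(G)$ (as $F(x^{25})$ and the $F(rx)$ with $r^5=1$) via the relation $(X+Y)^6+XY = 0$ satisfied by $(F,G)$. Once those ingredients are in hand, the argument above is essentially formal and mirrors Theorem~\ref{theorem2.5} step for step.
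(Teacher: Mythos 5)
Your proposal is correct and follows the paper's own argument essentially verbatim: the paper's proof of Theorem~\ref{theorem3.5} simply refers back to the proof of Theorem~\ref{theorem2.5}, which is exactly the argument you give (restriction of the composite to $V$ is $T_{5}$, hence onto by Lemma~\ref{lemma3.3}, so $V$ spans $\modd/N2$, and $T_{5}$ commutes with the $T_{p}$). No gaps; your write-up just makes explicit the spanning and linearity steps that the paper leaves implicit.
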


\begin{proof}
See the proof of Theorem \ref{theorem2.5}.
\qed
\end{proof}

\begin{theorem}
\label{theorem3.6}
The $T_{p}$, $p\ne 5$, act locally nilpotently on $\modd$. In other words, if $f\in \modd$ and $p\ne 5$, some power of $T_{p}$ annihilates $f$.
\end{theorem}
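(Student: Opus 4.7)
The plan is to follow the proof of Theorem \ref{theorem2.6} essentially verbatim, substituting level $\Gamma_0(5)$ data for level $\Gamma_0(3)$ data. By Theorem \ref{theorem3.5}, we have a Hecke-stable filtration $\modd \supset N2 \supset N1 \supset (0)$ whose successive quotients $\modd/N2$, $N2/N1$, $N1$ are Hecke-isomorphic to $V$, $W$, $V$ respectively, where $W$ is now the level~5 space with $Z/2[G^2]$-basis $\{D, D^8/G, D^2G, D^4G\}$. Local nilpotence on $\modd$ will be reduced, via a three-step filtration climb, to local nilpotence on the three quotients.

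First I would invoke \cite{3}: each $T_p$ with $p$ an odd prime acts locally nilpotently on $V$. Note $V$ is intrinsic to $F$ and does not depend on the level, so this applies without change. Next I would invoke \cite{2} for the analogous local nilpotence statement on $W$, namely that $T_p$, $p \ne 5$, acts locally nilpotently on $W$. (The expected mechanism, parallel to Theorem \ref{theorem2.8}(2), is the existence of a basis of $W$ adapted to two of the $T_p$'s, together with the action of $T_5^2$, though I only need to quote the conclusion.)

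Given these inputs, the argument is routine. Fix $f \in \modd$ and a prime $p \ne 5$. Since $T_p$ acts locally nilpotently on $\modd/N2 \cong V$, there exists $a_1$ with $T_p^{a_1}(f) \in N2$. Since $T_p$ acts locally nilpotently on $N2/N1 \cong W$, there exists $a_2$ with $T_p^{a_1+a_2}(f) \in N1$. Since $T_p$ acts locally nilpotently on $N1 \cong V$, there exists $a_3$ with $T_p^{a_1+a_2+a_3}(f) = 0$, giving the result.

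The main obstacle, to the extent there is one, is simply checking that the local nilpotence of $T_p$ on $W$ (for all odd $p \ne 5$, not just for $p \in \{3, 7, 11, 13\}$) really is established in \cite{2} in the form I need. If \cite{2} only furnishes this for a restricted set of primes, one would have to reprove it for the remaining primes by an argument parallel to Theorem \ref{theorem2.8}, locating an appropriate adapted basis of $W$ and handling the primes that swap submodules via a $T_5^2$-type composition formula. Beyond that verification, no new ideas are required.
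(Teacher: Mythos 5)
Your proposal matches the paper's proof: both reduce local nilpotence on $\modd$ to local nilpotence on the filtration quotients $V$, $W$, $V$ (via Theorem \ref{theorem3.5}) and cite \cite{3} and \cite{2} for those inputs. Your caveat about checking that \cite{2} covers all $p\ne 5$ is reasonable but resolved exactly as you anticipate, by the adapted bases and the $T_{11}^{2}$ formula recorded in Theorem \ref{theorem3.8} and the remarks following it.
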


\begin{proof}
\cite{3} and \cite{2} show that $T_{p}$ acts locally nilpotently on $V$ and $W$. And the quotients in the filtration of $\modd$ are Hecke-isomorphic to $V$, $W$ and $V$.
\qed
\end{proof}

For the rest of this section, unless otherwise noted, $S=\{3,7,11,13\}$ and $\newo$ is the 4-variable power series ring over $Z/2$ in the $t_{p}$, $p\in S$. Then $V$, $W$ and $\modd$ are all $\newo$-modules with $t_{p}$ acting by $T_{p}$, $p\in S$. Let $I(V)$, $I(W)$ and $I$ be the kernels of the respective actions.

$I(V)$ is easily described. As in the paragraph before Theorem \ref{theorem2.7} we view $V$ as a $Z/2[[t_{11},t_{13}]]$-module.  When $p=3$, $T_{p}:V\rightarrow V$ is multiplication by $\mathrm{unit} (t_{11})$, while when $p=7$, $T_{p}$ is multiplication by $\mathrm{unit} (t_{11})(t_{13})$. So $I(V)$ is generated by two elements $t_{3}+\mathrm{unit} (t_{11})$ and $t_{7}+\mathrm{unit} (t_{11})(t_{13})$, and:

\begin{theorem}
\label{theorem3.7}
$I(V)$ is generated by 2 elements $A$ and $B$ whose leading forms can be taken to be $t_{3}+t_{7}+t_{11}$ and $t_{7}$.
\end{theorem}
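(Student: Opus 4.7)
The plan is to mirror the proof of Theorem \ref{theorem2.7}, leaning on the Nicolas--Serre analysis of $V$ together with the change-of-variables trick described in the paragraph before that theorem. The preceding paragraph already produces two explicit generators for $I(V)$; the only real content of Theorem \ref{theorem3.7} is that the generating set can be adjusted so that the displayed leading forms appear.

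First I would recall what \cite{3} tells us: viewed as a $Z/2[[t_{3},t_{5}]]$-module, $V$ is faithful, and $T_{11}$, $T_{13}$, $T_{7}$ act respectively as $\mathrm{unit}(t_{3})$, $\mathrm{unit}(t_{5})$ and $\mathrm{unit}(t_{3})(t_{5})$. Since each ``$\mathrm{unit}(t_p)$'' is a regular parameter, the substitution $t_{11}\mapsto \mathrm{unit}(t_{3})$, $t_{13}\mapsto \mathrm{unit}(t_{5})$ gives an isomorphism of power series rings $Z/2[[t_{11},t_{13}]]\cong Z/2[[t_{3},t_{5}]]$, so $V$ is also faithful as a $Z/2[[t_{11},t_{13}]]$-module. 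Rewriting the Nicolas--Serre formulas in the new variables, $T_{3}$ (i.e.\ multiplication by $t_{3}$) becomes a unit times $t_{11}$, and $T_{7}$ becomes $\mathrm{unit}(t_{11})(t_{13})$, exactly as recorded in the paragraph before the theorem.

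Next I would observe that the map $\newo=Z/2[[t_{3},t_{7},t_{11},t_{13}]]\to Z/2[[t_{11},t_{13}]]$ determined by the action on $V$ sends $t_{3}$ and $t_{7}$ to these two explicit series, while $t_{11}$ and $t_{13}$ go to themselves. Its kernel is therefore the ideal generated by the two ``difference'' elements
\[
A_{0}=t_{3}+\mathrm{unit}(t_{11}),\qquad B_{0}=t_{7}+\mathrm{unit}(t_{11})(t_{13}),
\]
which have leading forms $t_{3}+t_{11}$ and $t_{7}$ respectively. To obtain the stated leading forms I simply replace $A_{0}$ by $A=A_{0}+B_{0}$ and leave $B=B_{0}$; since $B_{0}$'s contribution to $A$ is of order $\ge 1$ and $\mathrm{unit}(t_{11})(t_{13})$ has leading form $t_{11}t_{13}$, the leading form of $A$ is $t_{3}+t_{7}+t_{11}$, while the leading form of $B$ remains $t_{7}$. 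Since $(A,B)=(A_{0},B_{0})=I(V)$, the theorem follows.

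There is essentially no obstacle here: every ingredient is already in place by the end of the paragraph preceding the theorem, and the argument is a carbon copy of the one sketched before Theorem \ref{theorem2.7}, with the roles of $3,5$ exchanged.
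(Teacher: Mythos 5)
Your proposal is correct and follows essentially the same route as the paper: the paragraph preceding the theorem already records that $V$ is a faithful $Z/2[[t_{11},t_{13}]]$-module with $T_{3}=\mathrm{unit}(t_{11})$ and $T_{7}=\mathrm{unit}(t_{11})(t_{13})$, so that $I(V)=(t_{3}+\mathrm{unit}(t_{11}),\,t_{7}+\mathrm{unit}(t_{11})(t_{13}))$, and the only remaining step is exactly your replacement of the first generator by its sum with the second to arrange the leading form $t_{3}+t_{7}+t_{11}$. Nothing is missing.
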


To describe $I(W)$ we use the following results from \cite{2}:

\begin{theorem}
\label{theorem3.8}
Let $D_{1},D_{3},D_{7},D_{9}$ be $D,D^{8}/G,D^{2}G$ and $D^{4}G$. Define $D_{k}$ for $k$ positive and prime to $10$ by $D_{k+10}=G^{2}D_{k}$. Let $W_{a}$ be spanned by $D_{k}$, $k\equiv 1,3,7,9\mod{20}$, and $W_{b}$ be spanned by $D_{k}$, $k\equiv 11,13,17,19\mod{20}$. Then $W=W_{a}\oplus W_{b}$ and:
\begin{enumerate}
\addtolength{\itemsep}{2pt}
\item[(1)] The $T_{p}$, $p\equiv 1,3,7,9\mod{20}$, stabilize $W_{a}$ and $W_{b}$. The $T_{p}$, $p\equiv 11,13,17,19\mod{20}$, map $W_{a}$ to $W_{b}$ and $W_{b}$ to $W_{a}$.
\item[(2)] $W_{a}$ has a basis $\{m_{i,j}\}$ with $m_{0,0}=D$ adapted to $T_{3}$ and $T_{7}$. The same holds for $W_{b}$ with $m_{0,0}=D_{11}$. It follows that $T_{3}$ and $T_{7}$ act locally nilpotently on $W_{a}$, $W_{b}$ and $W$.
\item[(3)] Taking $S=\{3,7\}$ and making $W$ into a $Z/2[[t_{3},t_{7}]]$-module, we find that each $T_{p}:W\rightarrow W$, $p\equiv 1,3,7,9\mod{20}$, is multiplication by some $u$ in the maximal ideal of $Z/2[[t_{3},t_{7}]]$.
\item[(4)] And each $T_{p}:W\rightarrow W$, $p\equiv 11,13,17,19\mod{20}$ is the composition of $T_{11}$ with multiplication by some $u$ in $Z/2[[t_{3},t_{7}]]$.
\item[(5)] $T_{11}^{2}:W\rightarrow W$ is multiplication by $\lambda^{2}$ for some $\lambda$ in $Z/2[[t_{3},t_{7}]]$ with leading form $t_{3}+t_{7}$.
\end{enumerate}
\end{theorem}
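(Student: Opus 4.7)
The plan is to follow the strategy of Theorem \ref{theorem2.8} in the level-$3$ case, adapted to the richer grading coming from $(Z/20)^{*}$.

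First, I would set up the decomposition $W=W_{a}\oplus W_{b}$. The key observation is that $\{1,3,7,9\}$ is an index-$2$ subgroup of $(Z/20)^{*}$, with nontrivial coset $\{11,13,17,19\}$ (it is cut out by the nontrivial character mod $20$ of conductor not dividing $4$ or $5$, i.e., the Kronecker character of discriminant $-20$). Since $D_{k+10}=G^{2}D_{k}$ by definition, the residue class of $k$ modulo $20$ is a well-defined grading on the $D_{k}$. The four $Z/2[G^{2}]$-basis elements of $W$ listed in Theorem \ref{theorem3.2}(5) match the residues $1,3,7,9$ via $D=D_{1}$, $D^{8}/G=D_{3}$, $D^{2}G=D_{7}$, $D^{4}G=D_{9}$, so $W_{a}$ is the $Z/2[G^{4}]$-span of these four and $W_{b}=G^{2}W_{a}$; since $Z/2[G^{2}]=Z/2[G^{4}]\oplus G^{2}\cdot Z/2[G^{4}]$, summing gives $W=W_{a}\oplus W_{b}$.

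Second, for part (1), I would verify by an $x$-expansion computation (or using the algebraic conjugacy trick from Lemma \ref{lemma3.4}) that $T_{p}(D_{k})$ is supported on $D_{\ell}$ with $\ell \equiv pk$ or $\ell \equiv k/p \mod{20}$. Since multiplication by $p$ in $(Z/20)^{*}$ preserves the subgroup $\{1,3,7,9\}$ precisely when $p$ lies in that subgroup, the stabilization/swap dichotomy follows.

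For parts (2)--(5), the key technical step is constructing bases $\{m_{i,j}\}$ of $W_{a}$ and $W_{b}$ adapted to $T_{3}$ and $T_{7}$. I would proceed in the spirit of Nicolas--Serre \cite{3}: first establish local nilpotence of $T_{3}$ and $T_{7}$ on each summand via a filtration by $G^{2}$-degree (exploiting the polynomial identity $(F+G)^{6}=FG$ from Theorem \ref{theorem3.2}(1)), then construct an adapted basis inductively with $m_{0,0}=D$ on $W_{a}$ and $m_{0,0}=D_{11}$ on $W_{b}$, choosing successive $m_{i,j}$ as lifts under $T_{3}$ and $T_{7}$ and verifying triangularity at each step. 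Once the adapted bases exist, the observation following Lemma \ref{lemma2.3} guarantees that $Z/2[[t_{3},t_{7}]]$ acts faithfully and each nonzero element acts surjectively on each summand. Part (3) then follows by matching each $T_{p}$ with $p\equiv 1,3,7,9\mod{20}$ against multiplication by its action on the cyclic generator. Part (4) uses the same trick after first composing with $T_{11}$, which swaps the summands by (1). For (5), $T_{11}^{2}$ stabilizes each summand and intertwines with the swap $T_{11}$, so by the usual matching argument it is multiplication by a single $\lambda^{2}\in Z/2[[t_{3},t_{7}]]$; the leading form $t_{3}+t_{7}$ is then extracted from the low-order $x$-expansion of $T_{11}^{2}(D)$.

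The main obstacle is constructing the adapted bases in the third stage. This is the direct analogue of the Nicolas--Serre result for $T_{3}$ and $T_{5}$ on $V$, but now in a larger space carrying an additional $(Z/20)^{*}$-grading and for the new pair $(T_{3},T_{7})$. The computational core requires choosing a filtration compatible with both operators and executing a delicate triangularity argument using the explicit polynomial identities for $T_{p}$ on $F$-monomials; verifying the precise leading form of $\lambda$ in (5) is then a matter of computing $T_{11}^{2}(D)$ modulo a sufficient power of the maximal ideal and checking it matches $(t_{3}+t_{7})^{2}\cdot D$ modulo higher-order terms.
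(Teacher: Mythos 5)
The first thing to say is that the paper does not prove this theorem at all: it is explicitly imported from reference \cite{2} (``The following are proved in \cite{2}''), just as the parallel Theorem \ref{theorem2.8} is imported from \cite{1}. So the only fair comparison is between your sketch and the actual content of \cite{2}. Your setup of the decomposition $W=W_{a}\oplus W_{b}$ is correct and complete: since $G=D^{5}$ here, the four $Z/2[G^{2}]$-generators are $D^{1},D^{3},D^{7},D^{9}$, the grading by $k\bmod 20$ is well defined because $G^{2}=D^{10}$, and the coset structure of $\{1,3,7,9\}$ in $(Z/20)^{*}$ is exactly what drives part (1). That much matches what \cite{2} does.

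The genuine gap is that everything from part (2) onward is a statement of intent rather than an argument. The existence of a basis of $W_{a}$ (and of $W_{b}$) adapted to the pair $(T_{3},T_{7})$ is a Nicolas--Serre-type structure theorem; it is the entire technical content of \cite{2}, and ``construct an adapted basis inductively\ldots verifying triangularity at each step'' cannot be waved through --- one must actually exhibit the recursion and prove the triangularity, which depends on explicit formulas for $T_{3}$ and $T_{7}$ on the $D_{k}$ that you have not derived. Likewise your part (1) rests on the unproved assertion that $T_{p}(D_{k})$ is supported on $D_{\ell}$ with $\ell\equiv pk$ or $k/p \pmod{20}$; this is true but needs the modular-forms (or recurrence) input from \cite{2}, not just ``an $x$-expansion computation.'' Finally, for (5) you must first show that $T_{11}^{2}$ (a priori only known to be multiplication by some $u\in Z/2[[t_{3},t_{7}]]$ by your part (3) applied to $p^{2}$-type reasoning --- but $121\equiv 1 \pmod{20}$, so this does follow from (3)) is a \emph{square} $\lambda^{2}$; identifying $\lambda$ and its leading form $t_{3}+t_{7}$ requires a separate argument, not merely a low-order expansion check, since one needs to know the expansion of $T_{11}$ against the adapted basis to conclude squareness. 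In short: your outline reproduces the architecture of the proof in \cite{2}, but the load-bearing steps are precisely the ones left unexecuted.
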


\begin{remark*}{Remarks}
\label{remarkafter3.8}
Now each $T_{p}:W\rightarrow W$, $p\ne 5$, is locally nilpotent on $W$. And if $S=\{3,7,11\}$, each $T_{p}$ is multiplication by an element of $Z/2[[t_{3},t_{7},t_{11}]]$. And if we set $\varepsilon = t_{11}+\lambda$, then $\varepsilon^{2}$ kills $W$. Note that the leading form of $\varepsilon$ is $t_{3}+t_{7}+t_{11}$.
\end{remark*}

\begin{theorem}
\label{theorem3.9}
$I(W)$ is generated by 2 elements $\varepsilon^{2}$ and $C$, where $\varepsilon$ is congruent mod~$m^{2}$ to the $A$ of Theorem \ref{theorem3.7} and the leading form of $C$ is $t_{13}$.
\end{theorem}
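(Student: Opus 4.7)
\begin{proofsketch}
The plan is to follow the proof of Theorem \ref{theorem2.9} line for line, with the ``level-swapping'' prime now being $t_{11}$ in place of $t_{5}$ (by Theorem \ref{theorem3.8}(1) it exchanges $W_{a}$ and $W_{b}$, and $T_{11}^{2}=\lambda^{2}$) and the extra generator now arising from $t_{13}$ in place of $t_{11}$.

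First I would determine the kernel of the action of the subring $R_{0}=Z/2[[t_{3},t_{7},t_{11}]]$ on $W$. The remark after Theorem \ref{theorem3.8} puts $(\varepsilon^{2})=(t_{11}^{2}+\lambda^{2})$ into this kernel. Given any $R\in R_{0}$ that kills $W$, reduce modulo $(\varepsilon^{2})$ to write $R=u+u^{\prime}t_{11}$ with $u,u^{\prime}\in Z/2[[t_{3},t_{7}]]$. For any $w\in W_{a}$, Theorem \ref{theorem3.8}(1) puts $uw$ into $W_{a}$ and $u^{\prime}t_{11}w$ into $W_{b}$, so both must vanish separately; applying $T_{11}$ once more to the second identity shows $u^{\prime}\lambda^{2}$ annihilates $W_{a}$. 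Since $Z/2[[t_{3},t_{7}]]$ acts faithfully on $W_{a}$ (adapted basis, by the observation following Lemma \ref{lemma2.3}), $u=0$ and $u^{\prime}\lambda^{2}=0$; since $\lambda\ne 0$, $u^{\prime}=0$ as well. So the kernel of $R_{0}$ on $W$ is exactly $(\varepsilon^{2})$, and since $\varepsilon$ has the same leading form $t_{3}+t_{7}+t_{11}$ as $A$, $\varepsilon\equiv A\mod{m^{2}}$.

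Next I would adjoin $t_{13}$. By Theorem \ref{theorem3.8}(4), $T_{13}=vT_{11}$ on $W$ for some $v\in Z/2[[t_{3},t_{7}]]\subset\newo$, so $C=t_{13}+vt_{11}$ is in $I(W)$, and its leading form is $t_{13}$ provided $v\in m$. By direct analogy with the level-3 computation $T_{11}(D^{5})=0$, $T_{5}(D^{5})=D$, I would exhibit an explicit $w\in W_{b}$ with $T_{13}(w)=0$ and $T_{11}(w)=D$; the natural candidate is $w=D_{11}=m_{0,0}$ of $W_{b}$, and the verification is a direct $q$-expansion calculation using the basis of Theorem \ref{theorem3.2}(5). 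The identity $T_{13}(w)+vT_{11}(w)=0$ then reads $v(T_{3},T_{7})\cdot D=0$; since $T_{3}(D)=T_{7}(D)=0$, this collapses to $v(0,0)\cdot D=0$, forcing $v(0,0)=0$. Finally, any element of $I(W)\subset\newo$ can be reduced modulo $C$ to eliminate $t_{13}$, putting it in $R_{0}$, where by the first step it lies in $(\varepsilon^{2})$; so $I(W)=(\varepsilon^{2},C)$.

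The main obstacle is the explicit Hecke-operator computation needed to show $v\in m$: the rest of the argument is formal and mirrors Theorem \ref{theorem2.9}, but this final step requires going into the $q$-expansions of $D$ and its relatives $D_{k}$ and doing a small concrete calculation. This is the point at which the level-5 arithmetic enters non-formally.
\end{proofsketch}
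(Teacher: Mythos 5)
Your proposal is correct and takes essentially the same approach as the paper: first determine the kernel of $Z/2[[t_{3},t_{7},t_{11}]]$ acting on $W$ via the $W_{a}\oplus W_{b}$ decomposition and the faithfulness of $Z/2[[t_{3},t_{7}]]$, then adjoin $C=t_{13}+vt_{11}$ from (4) of Theorem \ref{theorem3.8} and verify $v$ is a non-unit by the explicit computation $T_{13}(D_{11})=0$, $T_{11}(D_{11})\ne 0$. The only cosmetic difference is in that last verification: the paper needs only $T_{11}(D_{11})=x+\cdots\ne 0$ (a unit acts bijectively, so $vT_{11}(D_{11})=0$ already forces $v$ to be a non-unit), whereas your collapse to $v(0,0)\cdot D=0$ relies on the sharper identification $T_{11}(D_{11})=D$, which is not actually needed.
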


\begin{proof}
The proof is essentially the same as that of Theorem \ref{theorem2.9}. At the very end we use (4) of Theorem \ref{theorem3.8} to see that $I(W)$ contains an element of the form $t_{13}+vt_{11}$ with $v$ in $\newo$. But an easy calculation shows that $T_{13}(D_{11})=T_{13}(DG^{2})=0$ while $T_{11}(D_{11})=T_{11}(x^{11}+\cdots)=x+\cdots\ne 0$. So $v$ is not a unit, and our element has leading form $t_{13}$.
\qed
\end{proof}

\begin{definition}
\label{def3.10}
$V(\star,0)$ is the kernel of $T_{5}:V\rightarrow V$.
\end{definition}

If $\{m_{i,j}\}$ is a basis of $V$ adapted to $T_{3}$ and $T_{5}$, the $m_{i,0}$ form a $Z/2$-basis of $V(\star,0)$. $Z/2[[t_{3}]]$ acts faithfully and locally nilpotently on $V(\star,0)$. $V(\star,0)$ is an $\newo$-submodule of $V$, and the kernel of the action is a height 3 prime ideal, $P$, generated by $t_{7}$, $t_{13}$ and an element with leading form $t_{11}+t_{3}$.

\begin{lemma}
\label{lemma3.11}
$W$, as well as $V$, contains a ``Hecke-submodule'' isomorphic to $V(\star,0)$.
\end{lemma}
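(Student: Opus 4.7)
The plan is to follow the proof of Lemma \ref{lemma2.11} verbatim, with $T_{5}, U_{5}$ replacing $T_{3}, U_{3}$. Concretely, I will construct the desired embedding as the composition $V(\star, 0) \subset N2 \stackrel{\pr}{\rightarrow} W$ and check that it is $\newo$-linear and injective.

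The first step is to show $V(\star, 0) \subset N2$. If $f \in V(\star, 0)$ then $T_{5}(f) = 0$; by Lemma \ref{lemma3.4} this equals $U_{5}(\tr(f))$, and since $U_{5}: N1 \rightarrow V$ is a bijection, we obtain $\tr(f) = 0$, so $f$ lies in the kernel $N2$ of $\tr$. Setting $\phi(f) = \pr(f)$ then gives a well-defined map $\phi: V(\star, 0) \rightarrow W$, using that $\pr(N2) = W$ by Theorem \ref{theorem3.2}(5). By that same part of Theorem \ref{theorem3.2}, $\pr$ commutes with all $T_{p}$, $p \ne 5$, and in particular with the $T_{p}$ for $p \in S = \{3, 7, 11, 13\}$; thus $\phi$ is $\newo$-linear. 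Note that $\phi(m_{0,0}) = \pr(F) = D \ne 0$.

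The key remaining step is injectivity. The kernel of $\phi$ is an $\newo$-submodule of $V(\star, 0)$, hence in particular $T_{3}$-stable. On the basis $\{m_{i, 0}\}_{i \ge 0}$ of $V(\star,0)$ (inherited from the basis $\{m_{i,j}\}$ of $V$ adapted to $T_{3}$ and $T_{5}$), the operator $T_{3}$ acts as a lowering map: $T_{3}(m_{i+1, 0}) = m_{i, 0}$ and $T_{3}(m_{0,0}) = 0$. So any nonzero element $f = \sum_{i=0}^{N} c_{i} m_{i, 0}$ with $c_{N} \ne 0$ satisfies $T_{3}^{N}(f) = c_{N} m_{0, 0} = F$, and every nonzero $T_{3}$-stable subspace of $V(\star,0)$ contains $F$. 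Since $\phi(F) = D \ne 0$ the kernel is $(0)$, giving the desired embedding. This ``socle'' observation is the only substantive ingredient beyond the analogues of earlier arguments; it is the step I expect to be the main obstacle, in that it requires singling out the correct stability property (under $T_{3}$ rather than $T_{5}$) to exploit.
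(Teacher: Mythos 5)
Your proposal is correct and follows essentially the same route as the paper: show $V(\star,0)\subset N2$ via $T_{5}(f)=U_{5}(\tr(f))=0$ and the injectivity of $U_{5}$ on $N1$, then use $\pr$ to map into $W$ and check the kernel is trivial because $m_{0,0}\mapsto D\ne 0$. The paper is terser on injectivity (it simply notes that $m_{0,0}$ is not in the kernel, leaving implicit the fact that every nonzero Hecke-stable subspace of $V(\star,0)$ contains $m_{0,0}$), whereas you spell out the $T_{3}$-lowering argument explicitly; this is a difference of exposition, not of substance.
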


\begin{proof}
If $f\in V(\star,0)$, $T_{5}(f)=U_{5}(\tr(f))=0$. As in the proof of Lemma \ref{lemma2.11} we conclude that $f$ is in $N2$ and we get a composite map $V(\star,0)\subset N2\stackrel{\pr}{\rightarrow}W$ taking $m_{0,0}$ to $D$, which is the desired imbedding.
\qed
\end{proof}

\begin{theorem}
\label{theorem3.12}
Let $A,B,C$ be as in Theorems \ref{theorem3.7} and \ref{theorem3.9}. Then $(A,B,C)=I(V)+I(W)=P$, the kernel of the action of $\newo$ on $V(\star,0)$.
\end{theorem}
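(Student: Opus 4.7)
The proof plan is to imitate step-by-step the argument for Theorem~\ref{theorem2.12}, which proves the analogous statement in level~$\Gamma_0(3)$. The three ingredients we need are: (i)~the containment $(A,B,C)\subset I(V)+I(W)$, (ii)~the containment $I(V)+I(W)\subset P$, and (iii)~equality $(A,B,C)=P$, which will follow from a dimension/primality argument.

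For (i), $A$ and $B$ generate $I(V)$ by Theorem~\ref{theorem3.7}, and $C$ lies in $I(W)$ by Theorem~\ref{theorem3.9}, so all three generators are plainly in $I(V)+I(W)$. For (ii), Lemma~\ref{lemma3.11} exhibits $V(\star,0)$ as an $\newo$-submodule of both $V$ and $W$; so any element of $\newo$ killing $V$ or killing $W$ also kills $V(\star,0)$, placing $I(V)+I(W)$ inside $P$.

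For (iii), the remarks preceding Lemma~\ref{lemma3.11} tell us that $P$ is a height~$3$ prime of $\newo$ (it is the kernel of the action of $\newo$ on $V(\star,0)$, on which $Z/2[[t_3]]$ acts faithfully, and its three explicit generators have leading forms $t_7$, $t_{13}$, and $t_{11}+t_3$). I claim that $(A,B,C)$ is also a height~$3$ prime. Indeed, the leading forms of $A$, $B$, $C$ are $t_3+t_7+t_{11}$, $t_7$, $t_{13}$; these are linearly independent in the $4$-dimensional vector space $m/m^2$, so $A$, $B$, $C$ can be completed to a regular system of parameters for the $4$-dimensional regular local ring $\newo$. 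Then $(A,B,C)$ is generated by part of a regular system of parameters and so is a prime of height~$3$ (with quotient a $1$-dimensional regular local ring). Since $(A,B,C)\subset I(V)+I(W)\subset P$ and the outer two ideals are both height~$3$ primes of a Noetherian domain, all three ideals coincide, giving the theorem.

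The only step that is not entirely routine is verifying that $(A,B,C)$ has height~$3$; this is however immediate from the linear independence of the leading forms in $m/m^2$ together with regularity of $\newo$. Everything else is either a direct reference to Theorems~\ref{theorem3.7} and~\ref{theorem3.9}, the embedding in Lemma~\ref{lemma3.11}, or the observation about the generators of $P$ already recorded before that lemma.
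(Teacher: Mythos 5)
Your proposal is correct and follows essentially the same route as the paper's (very terse) argument for Theorem \ref{theorem2.12}, which Theorem \ref{theorem3.12} is stated to mimic: the two containments $(A,B,C)\subset I(V)+I(W)\subset P$ via Theorems \ref{theorem3.7}, \ref{theorem3.9} and Lemma \ref{lemma3.11}, followed by the observation that the outer ideals are both height~3 primes. Your justification that $(A,B,C)$ is a height~3 prime (linear independence of the leading forms in $m/m^{2}$, hence part of a regular system of parameters) correctly supplies a detail the paper leaves implicit.
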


The proof mimics that of Theorem \ref{theorem2.12}.

\begin{theorem}
\label{theorem3.13}
The only $\newo$-linear map $\alpha : W\rightarrow V$ is the zero-map.
\end{theorem}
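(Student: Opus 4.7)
The plan is to imitate the proof of Theorem \ref{theorem2.13} almost verbatim, with $t_{7}$ again playing the key role. Let $\alpha : W \rightarrow V$ be an $\newo$-linear map. Its image $\alpha(W)$ is an $\newo$-submodule of $V$, and it is annihilated by both $I(W)$ and $I(V)$, hence by $I(V)+I(W)$. By Theorem \ref{theorem3.12}, $I(V)+I(W)=P$, so $\alpha(W)$ is killed by $P$. Since $P$ is generated (in part) by $t_{7}$, in particular $t_{7}\alpha(W)=(0)$, i.e., $\alpha(t_{7}W)=(0)$.

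The next step is to show that $t_{7}W = W$. Here I would appeal to Theorem \ref{theorem3.8}(2), which gives $Z/2$-bases $\{m_{i,j}\}$ of $W_{a}$ and of $W_{b}$, each adapted to $T_{3}$ and $T_{7}$. The observation following Lemma \ref{lemma2.3} then applies to each of $W_{a}$ and $W_{b}$ as $Z/2[[t_{3},t_{7}]]$-modules, and shows that multiplication by $t_{7}$ is surjective on both. Since $W=W_{a}\oplus W_{b}$, we conclude $t_{7}W=W$, and therefore $\alpha(W)=\alpha(t_{7}W)=(0)$, as required.

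I do not anticipate any genuine obstacle: the only thing to check is that the ingredients used in the level-$3$ argument (namely $t_{7}\in P$, and $t_{7}$ acting surjectively on $W$ via the adapted-basis observation) are still available in the level-$5$ situation, and both follow directly from Theorems \ref{theorem3.8} and \ref{theorem3.12}. The proof can therefore simply be stated as ``see the proof of Theorem \ref{theorem2.13}.''
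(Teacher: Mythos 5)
Your proposal is correct and is essentially identical to the paper's own proof: the paper likewise reduces to the level-$3$ argument of Theorem \ref{theorem2.13}, noting that $t_{7}\in P=I(V)+I(W)$ kills $\alpha(W)$ and that the adapted bases of $W_{a}$ and $W_{b}$ from Theorem \ref{theorem3.8}(2) give $T_{7}(W)=W$. No issues.
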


The proof is like that of Theorem \ref{theorem2.13}, but this time we use the fact that $W_{a}$ and $W_{b}$ have bases adapted to $T_{3}$ and $T_{7}$ to see that $T_{7}(W)=W$.

\begin{theorem}
\label{theorem3.14}
If $p\ne 5$, $T_{p}:\modd\rightarrow\modd$ is multiplication by some $u$ in $\newo$.
\end{theorem}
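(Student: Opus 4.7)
The plan is to carry out the argument of Theorem~\ref{theorem2.14} almost verbatim in the level-$\Gamma_0(5)$ setting, using the ingredients already assembled in this section. First I would invoke the paragraph preceding Theorem~\ref{theorem3.7} to obtain $u\in\newo$ with $T_p:V\to V$ equal to multiplication by $u$, and parts (3)--(4) of Theorem~\ref{theorem3.8} together with the following remark to obtain $u'\in\newo$ with $T_p:W\to W$ equal to multiplication by $u'$. The maps $f\mapsto T_p(f)-uf$ on $V$ and $f\mapsto T_p(f)-u'f$ on $W$ then both annihilate $V(\star,0)$, which by Lemma~\ref{lemma3.11} embeds as an $\newo$-submodule of both $V$ and $W$, so $u-u'$ lies in the annihilator $P$ of $V(\star,0)$. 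By Theorem~\ref{theorem3.12}, $P=(A,B,C)$, and since $(A,B)\subset I(V)$ and $(C)\subset I(W)$, modifying $u$ by an element of $(A,B)$ and $u'$ by an element of $(C)$ leaves the Hecke actions on $V$ and $W$ unchanged and arranges $u=u'$.

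With a common $u$ in hand, I would define $\alpha:\modd\to\modd$ by $\alpha(f)=T_p(f)-uf$. By construction $\alpha$ annihilates both $V$ and $W$. Since every element of $\newo$ commutes with $\pr$ and $U_5$ (these are built from the $T_q$ for $q\ne 5$, which commute with both), so does $\alpha$; combining this with the Hecke bijections $U_5:N1\to V$ and $\pr:N2/N1\to W$ from Theorem~\ref{theorem3.2}, one sees that $\alpha$ annihilates $N1$ and sends $N2$ into $N1$. The induced $\newo$-linear map $N2/N1\to N1$ is, under the identifications above, a map $W\to V$, which must be zero by Theorem~\ref{theorem3.13}. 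Hence $\alpha(N2)=(0)$. Finally, the proof of Theorem~\ref{theorem3.5} shows that $V\subset\modd$ surjects onto $\modd/N2$, so every $f\in\modd$ decomposes as $v+n$ with $v\in V$ and $n\in N2$; then $\alpha(f)=\alpha(v)+\alpha(n)=0$, so $\alpha\equiv 0$ and $T_p$ acts on all of $\modd$ as multiplication by $u$.

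I do not anticipate a real obstacle here: the theorem is the direct translation of Theorem~\ref{theorem2.14} and every supporting result---the $\newo$-module structures on $V$ and $W$, the ideal-theoretic comparison via Theorem~\ref{theorem3.12}, the vanishing of $\newo$-linear maps $W\to V$, and the spanning fact from Theorem~\ref{theorem3.5}---has an exact level-5 analog already in hand. The one point worth verifying carefully is that the ``unit $\times$ leading form'' descriptions of the $T_p$ on $V$ and of the $T_p$ on $W$ actually produce elements of $\newo=Z/2[[t_3,t_7,t_{11},t_{13}]]$, rather than of some proper subring, so that the modification step in the first paragraph stays inside $\newo$; this is immediate from the discussion preceding Theorem~\ref{theorem3.7} and the remark following Theorem~\ref{theorem3.8}.
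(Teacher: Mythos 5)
Your proposal is correct and is essentially the paper's own argument: the paper proves Theorem~\ref{theorem3.14} by saying only ``argue as in the proof of Theorem~\ref{theorem2.14}, using $V(\star,0)$ in place of $V(0,\star)$,'' and your write-up is precisely that transposition, with the same use of Lemma~\ref{lemma3.11}, Theorem~\ref{theorem3.12}, Theorem~\ref{theorem3.13}, and the spanning fact from Theorem~\ref{theorem3.5}.
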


We argue as in the proof of Theorem \ref{theorem2.14}, using $V(\star,0)$ in place of $V(0,\star)$.

\begin{theorem}
\label{theorem3.15}
There are $A,B,C$ in $\newo$ with leading forms $t_{3}+t_{7}+t_{11}$, $t_{7}$ and $t_{13}$ such that $I(V)=(A,B)$ and $I(W)=(A^{2},C)$. The kernel, $I$, of the action of $\newo$ on $\modd$ is $I(V)\cap I(W)=(A^{2},AC,BC)$.
\end{theorem}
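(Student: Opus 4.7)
The plan is to mirror the proof of Theorem~\ref{theorem2.15}. There are three tasks to carry out: adjust the generators supplied by Theorems~\ref{theorem3.7} and~\ref{theorem3.9} so that the element ``$A$'' appearing in $I(V)=(A,B)$ coincides with the ``$\varepsilon$'' appearing in $I(W)=(\varepsilon^{2},C)$; compute the intersection $(A,B)\cap(A^{2},C)$ as an ideal of $\newo$; and finally, upgrade the obvious inclusion $I\subseteq I(V)\cap I(W)$ to an equality by exploiting the Hecke-filtration of $\modd$.

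For the adjustment, I would start from $A,B$ and $\varepsilon,C$ as in Theorems~\ref{theorem3.7} and~\ref{theorem3.9}, so that $\varepsilon-A\in m^{2}$. By Theorem~\ref{theorem3.12} the sum $I(V)+I(W)$ equals the prime ideal $(A,B,C)$. Since $\varepsilon^{2}\in I(W)$ lies in this prime, $\varepsilon$ itself lies in $(A,B,C)$, and hence $A-\varepsilon\in(A,B,C)\cap m^{2}$. Because the leading forms of $A,B,C$ are linearly independent in $m/m^{2}$, this intersection equals $mA+mB+mC$. I would then absorb the $mA+mB$-part of $A-\varepsilon$ into $A$ (which does not disturb $I(V)=(A,B)$) and the $mC$-part into $\varepsilon$ (which does not disturb $I(W)=(\varepsilon^{2},C)$), so that after this modification $A=\varepsilon$. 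This yields $I(W)=(A^{2},C)$ with the advertised leading forms.

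For the intersection, $(A^{2},AC,BC)\subseteq(A,B)\cap(A^{2},C)$ is immediate. For the reverse containment I would use that the faithful action of $\newo/I(V)$ on $V$ realizes $\newo/(A,B)$ as the two-variable power series ring $Z/2[[t_{11},t_{13}]]$, in particular a domain, and that $C$ has nonzero image there since its leading form $t_{13}$ survives the quotient. Writing an element of the intersection as $hA^{2}+iC$ and reducing modulo $(A,B)$ yields $\bar\imath\,\bar C=0$, whence $i\in(A,B)$ and the original element lies in $(A^{2},AC,BC)$. The identification $I=I(V)\cap I(W)$ then follows as in Theorem~\ref{theorem2.15}: given $u\in I(V)\cap I(W)$, multiplication by $u$ on $\modd$ kills $N1\cong V$ and $N2/N1\cong W$ (Theorem~\ref{theorem3.5}), sends $N2$ into $N1$, and therefore induces an $\newo$-linear map $W\to V$, which vanishes by Theorem~\ref{theorem3.13}; combined with the fact (from Theorem~\ref{theorem3.5}) that $V$ surjects onto $\modd/N2$, this forces multiplication by $u$ to annihilate all of $\modd$.

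The most delicate step will be the normalization making $A=\varepsilon$: it rests on both the primality of $(A,B,C)$ from Theorem~\ref{theorem3.12} and the identification $(A,B,C)\cap m^{2}=mA+mB+mC$ coming from the linear independence of leading forms. After that, the intersection calculation and the filtration argument are direct adaptations of their level-$\Gamma_{0}(3)$ counterparts, with Theorem~\ref{theorem3.13} supplying the essential non-triviality in the last step.
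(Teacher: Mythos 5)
Your proposal is correct and follows essentially the same route as the paper, whose proof of Theorem~\ref{theorem3.15} is simply to mimic that of Theorem~\ref{theorem2.15}: the same normalization $A=\varepsilon$ via the primality of $(A,B,C)$ and the identity $(A,B,C)\cap m^{2}=mA+mB+mC$, followed by the same filtration argument using Theorems~\ref{theorem3.5} and~\ref{theorem3.13}. The only difference is that you supply an explicit justification (reduction modulo $(A,B)$ into the domain $Z/2[[t_{11},t_{13}]]$) for the intersection formula $(A,B)\cap(A^{2},C)=(A^{2},AC,BC)$, which the paper asserts with ``it follows that''; your justification is valid.
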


The proof mimics that of Theorem \ref{theorem2.15}.



\end{document}